\newtheorem{theorem}{Theorem}[section]
\newtheorem{proposition}{Proposition}[section]
\newtheorem{lemma}{Lemma}[section]
\theoremstyle{definition}
\newtheorem{definition}{Definition}[section]
\newtheorem{assumptions}{Hypothesis}[section]
\title{Indirect stabilization of nonlinear coupled wave equations in the presence of time delay or  indefinite damping}
\author{
	{\sc Alhabib Moumni}\\
MAMCS Group, FST Errachidia, \\Moulay Ismail University of Meknes,\\ P.O. Box 509,
Boutalamine, 52000, Errachidia, Morocco\\
	email: alhabibmoumni@gmail.com\\
		{\sc Cristina Pignotti}\\
Dipartimento di Ingegneria e Scienze dell’Informazione e Matematica, \\
Università degli Studi dell’Aquila,\\ Via Vetoio, 67100 L’Aquila, Italy\\
	email: cristina.pignotti@univaq.it\\
	{\sc Jawad Salhi}\\
	MAMCS Group, FST Errachidia, \\Moulay Ismail University of Meknes,\\ P.O. Box 509,
	Boutalamine, 52000, Errachidia, Morocco\\
	email: sj.salhi@gmail.com\\
	{\sc  Mouhcine Tilioua}
	\\
MAMCS Group, FST Errachidia, \\Moulay Ismail University of Meknes,\\ P.O. Box 509,
Boutalamine, 52000, Errachidia, Morocco\\
	email: m.tilioua@umi.ac.ma}
\date{}
\begin{document}
	
	\maketitle
	\begin{abstract}
This paper explores the exponential stability of two nonlinear wave equations coupled through their velocities. The analysis is divided into two main cases. First, we consider a system where one equation is damped, while the other experiences a discrete time delay. By reformulating the problem in an abstract framework, we use semigroup theory and energy methods to establish well-posedness and derive conditions that guarantee exponential energy decay. In the second case, we examine a scenario where a frictional damping term appears in the first equation, while the second equation contains an indefinite damping term, namely with a sign-changing coefficient. Although this setup can be viewed as a special case of the first, we analyze it separately and show that exponential stability still holds under a weaker condition.
	\end{abstract}
	
	Keywords: Stabilization, Nonlinear hyperbolic systems, delay feedback, indefinite damping
	
	MSC 2020:  93B07, 35L05, 93D15, 93C20
\section{Introduction} \label{intro}

Time delays are an inherent feature of numerous real-world systems, particularly those where the current state is influenced by historical values. Consequently, their inclusion in mathematical models is essential for accurately representing phenomena in biology, mechanics, and engineering. 
 
Over the past decades, substantial research has focused on the stability of systems with delayed damping, a phenomenon often stemming from sources such as control force computation, signal transmission, or measurement processing.


It is well-established that time delays frequently induce instability in systems that are otherwise,  i.e. without any time lags,  uniformly asymptotically stable (see e.g. \cite{datko1991, nicaise2006stability}). This destabilizing effect has consequently motivated significant research efforts aimed at developing effective strategies to mitigate or control these delay effects; see for instance \cite{xu2006stabilization,pignotti2012note,nicaise2006stability,ammari2010feedback,adimy2003global,ammari2022well,continelli2025energy}.

In many practical applications, system dynamics are too complex to be described by a single scalar equation, often requiring coupled systems of equations instead. 
Extensive literature exists on the stabilization of coupled wave equations without delay \cite{Alabau2017,Alabau2013,Alabau2003,Alabau2002,Alabau-Leautaud2013,Avdonin,Bennour,Gerbi2021,Koumaiha2021,Mokhtari,LiuRao2009,Wehbe2011}; however, the impact of delays on such coupled systems remains less explored. For weakly coupled systems, some theoretical results have been achieved in \cite{oliveira2020stability,xu2020uniform,rebiai2014exponential}, while recent progress for strongly coupled systems can be found in \cite{ayechi2025indirect,akil2020stability,akil2021stability,silga2022indirect,moumni2024theoretical}.

Let $\Omega$ be a bounded open domain in $\mathbb{R}^n$ with a boundary $\partial \Omega$ of class $C^2$. In \cite{Gerbi2021}, the authors have investigated the energy decay of a system of two linear wave equations coupled through velocities described by (see also \cite{Alabau2017} for the case of a nonlinear damping):

\begin{equation}
	\label{problemwithoutdrlay}
	\begin{cases}
		u_{tt}(x,t) - \Delta u(x,t) + b(x) y_{t}(x,t) + a(x) u_t(x,t) = 0, & \text{in } \Omega \times (0, \infty), \\  
		y_{tt}(x,t) - \Delta y(x,t) - b(x) u_{t}(x,t) = 0, & \text{in } \Omega \times (0, \infty), \\  
		u(x,t) = y(x,t) = 0, & \text{on } \partial\Omega \times (0, \infty), \\  
		u(x,0) = u_0(x), \quad u_t(x,0) = u_1(x), & \text{in } \Omega, \\  
		y(x,0) = y_0(x), \quad y_t(x,0) = y_1(x), & \text{in } \Omega, \\  
		y_t(x,t) = g(x,t), & \text{in } \Omega \times (-\tau,0),
	\end{cases}
\end{equation}
where $a,b \in C^0(\bar{\Omega},\mathbb{R})$ and $a$ is a function satisfying
 $$
 a(x) \geq 0 \text { almost everywhere (a.e.) in } \Omega,
 $$
 and for a given open subset $\omega \subset \Omega$
 $$
 a(x)>a_0>0 \quad \text { a.e. in } \omega.
 $$
 Moreover, it is imposed that
 \begin{equation}
 	\label{conditionb}
 	\overline{\omega_b}\subset\left\{x\in\Omega:\quad b(x)\neq0\right\},
 \end{equation}
 where $\omega_b$ is a non-empty open subset of $\Omega$.
 
 In order to analyze the energy decay properties of system \eqref{problemwithoutdrlay}, the following assumptions have been considered:

 \begin{assumptions} \label{assumptionsgerbi}
 	The coupling region $\omega_b$ is contained within the damping region $\omega$ $(\omega_b \subset \omega)$, and $\omega_b$ satisfies the so-called Geometric Control Condition (GCC), introduced in \cite{Bardos} and recalled below in Definition \ref{GCC}.
 \end{assumptions}

\begin{assumptions} \label{assumptionsalabau}
	The coupling coefficient $b(x)$ is small and positive, satisfying $0 \leq b(x) \leq b_{+}$, $\forall x \in \Omega$, where $b_{+} \in (0, b^{*}]$ and $b^{*}$ is a constant depending on the domain and the control region. Additionally, there exists $b_{-} > 0$ such that $b(x) > b_{-}$ almost everywhere in $\omega_b$. Moreover, both the coupling region $\omega_b$ and the damping region $\omega$ satisfy an appropriate geometric condition known as the Piecewise Multipliers Geometric Condition (PMGC), introduced in \cite{liu1997locally}, applied in \cite{Alabau2017} and recalled below in Definition \ref{PMGC}.
\end{assumptions}
 
 Under Assumption \ref{assumptionsgerbi}, the authors in \cite{Gerbi2021} combined a frequency domain approach with the multiplier technique to establish the exponential decay of the corresponding energy for system \eqref{problemwithoutdrlay}. On the other hand, under Assumption \ref{assumptionsalabau}, the authors in \cite{Alabau2017} employed multiplier techniques to achieve similar exponential stability results. 
 
 
In this work, we aim to extend the results in \cite{Alabau2017,Gerbi2021} by considering a more complex setting where the system incorporates a time delay in one equation and nonlinear source terms. Specifically, we investigate the following coupled wave system:

\begin{equation}
	\label{coupledstabilityproblem}
	\begin{cases}
		u_{tt}(x,t) - \Delta u(x,t) + b(x) y_{t}(x,t) + a_1(x) u_t(x,t) = f_1(u(x,t)), & \text{in } \Omega \times (0, \infty), \\  
		y_{tt}(x,t) - \Delta y(x,t) - b(x) u_{t}(x,t) + a_2(x) y_t(x,t-\tau) = f_2(y(x,t)), & \text{in } \Omega \times (0, \infty), \\  
		u(x,t) = y(x,t) = 0, & \text{on } \partial\Omega \times (0, \infty), \\  
		u(x,0) = u_0(x), \quad u_t(x,0) = u_1(x), & \text{in } \Omega, \\  
		y(x,0) = y_0(x), \quad y_t(x,0) = y_1(x), & \text{in } \Omega, \\  
		y_t(x,t) = g(x,t), & \text{in } \Omega \times [-\tau,0].
	\end{cases}
\end{equation}
Here, the functions $a_1\in C^0(\bar{\Omega},\mathbb{R})$ and  $a_2 \in L^{\infty}(\Omega)$ are given coefficients. The coefficient $a_1$ satisfies the conditions:

$$
a_1(x) \geq 0 \quad \text{a.e in } \Omega,
$$

and

$$
a_1(x) > a_0 > 0 \quad \text{a.e in  } \omega \subset \Omega.
$$  

The initial state $(u_0, u_1, y_0, y_1, g)$ belongs to an appropriate function space, which will be specified later. Moreover, $f_1$ and $f_2$ are two nonlinear functions
satisfying suitable hypotheses that will be specified in the next sections. Finally, $\tau > 0$ represents the constant time delay.

Unlike previous studies that considered coupled wave systems where both the damping and delay terms appear in the same equation \cite{Ammari, silga2022indirect,oliveira2020stability,moumni2024theoretical,akil2020stability}, our focus is on the more challenging scenario where the damping and delay are distributed across different equations. This configuration complicates the stability analysis. Indeed,  in the classical setting where both mechanisms are in the same equation, the damping can directly counterbalance the delay effects (cf.\cite{nicaise2006stability}). In our case, this interaction is not straightforward, leading to additional mathematical challenges. This motivates our investigation into establishing new energy decay results for system \eqref{coupledstabilityproblem} under appropriate conditions.

Furthermore, we include nonlinear terms in the system, taking into account the presence of source terms.  Nonlinear effects are often present in delay differential equations that come from real-life applications. Various works have studied hyperbolic equations with suitable nonlinear source terms. For example, in \cite{ayechi2025indirect,Feng, FP2016,PP2022,P2024}, the authors study the stability of solutions for some abstract wave equations with nonlinear terms  satisfying some local Lipschitz continuity assumption. The presence of nonlinear terms makes the models more difficult to deal with and the stability issues require a more sophisticated analysis.

In practical situations, such as in elasticity and viscoelasticity, these nonlinearities naturally appear and lead to nonlinear source terms. In \cite{ACS2008,BM2006,LX2023}, the authors prove energy decay estimates for such systems.

To the best of our knowledge, this is the first work in which the destabilizing effect of a delay feedback present in one equation is compensated by means of a standard frictional damping in the other equation. Moreover, the source terms introduce stability issues too. This combination makes the mathematical analysis more delicate.

Our goal is, then, to explore how the interaction between delay, nonlinearity,  damping, and coupling mechanisms influences the stability properties of the system, and to derive sufficient conditions ensuring exponential energy decay.

Additionally, we also examine the particular case $\tau=0$, which corresponds to a system with indefinite damping:

\begin{equation}
	\label{indefiniteproblem} 
	\begin{cases}
		u_{tt}(x,t) - \Delta u(x,t) + b(x) y_{t}(x,t) + a_1(x) u_t(x,t) = f_1(u(x,t)), & \text{in } \Omega \times (0, \infty), \\  
		y_{tt}(x,t) - \Delta y(x,t) - b(x) u_{t}(x,t) + a_2(x) y_t(x,t) = f_2(y(x,t)), & \text{in } \Omega \times (0, \infty), \\   
		u(x,t) = y(x,t) = 0, & \text{on } \partial\Omega \times (0, \infty), \\  
		u(x,0) = u_0(x), \quad u_t(x,0) = u_1(x), & \text{in } \Omega, \\  
		y(x,0) = y_0(x), \quad y_t(x,0) = y_1(x), & \text{in } \Omega.
	\end{cases}
\end{equation}
This system represents a limiting case of our study but is considered separately since we can obtain the result under weaker conditions when $\tau=0$. In particular,  we establish exponential stability for \eqref{indefiniteproblem} under a suitable assumption on $a_2^{-}(x)=-\min\{a_2(x),0\}$. The study of indefinite damping functions in scalar wave equations has been considered in previous works, such as \cite{Haraux, freitas1996stability,menz2007exponential,benaddi2000energy,lopez1997linear} but for coupled wave  equations has not been done as far as we know.   

 

\section{The delayed coupled system} \label{section2}

In this section, we analyze the well-posedness and stability of the nonlinear delayed coupled system \eqref{coupledstabilityproblem}. 
 Before presenting our main result of this section, we recall   the GCC introduced by \cite{Bardos} and the PMGC introduced by 
 \cite{liu1997locally}.
\begin{definition}
	\label{GCC}
	We say that a subset $\omega$ of $\Omega$ satisfies the GCC if there exists a time $T>0$ such that every ray of the geometrical optics starting at any point $x \in \Omega$ at $t=0$ enters the region $\omega$ within time $T$.
\end{definition}
\begin{definition}
	\label{PMGC}
	 We say that a subset $\omega \subset \Omega$ satisfies the $P M G C$, if there exist subsets $\Omega_j \subset \Omega$ with Lipschitz boundaries and points $x_j \in \mathbb{R}^N, j=1, \ldots, L$, such that $\Omega_i \cap \Omega_j=\emptyset$ for $i \neq j$ and $\omega$ contains a neighborhood in $\Omega$ of the set $\cup_{j=1}^L \gamma_j\left(x_j\right) \cup\left(\Omega \backslash \cup_{j=1}^L \Omega_j\right)$, where $\gamma_j\left(x_j\right)=\left\{x \in \partial \Omega_j:\left(x-x_j\right) \cdot \nu_j(x) \geqslant 0\right\}$ and $\nu_j$ is the outward unit normal to $\partial \Omega_j$.
\end{definition}

 We introduce the energy space  
$$
\mathcal{H} = \left( H_0^1(\Omega) \times L^2(\Omega) \right)^2,
$$
equipped with the inner product  
$$
(U, \widetilde{U})_{\mathcal{H}} = \int_{\Omega} \nabla u_1 \cdot \nabla \widetilde{u}_1 \, dx + \int_{\Omega} u_2 \widetilde{u}_2 \, dx + \int_{\Omega} \nabla y_1 \cdot \nabla \widetilde{y}_1 \, dx + \int_{\Omega} y_2 \widetilde{y}_2 \, dx,
$$
for all $ U = (u_1, u_2, y_1, y_2) $ and $ \widetilde{U} = (\widetilde{u}_1, \widetilde{u}_2, \widetilde{y}_1, \widetilde{y}_2)$ in $ \mathcal{H}$. The associated norm is given by  
$$
\|U\|_{\mathcal{H}} = \sqrt{(U, U)_{\mathcal{H}}}.
$$

Setting $u_1 = u$, $u_2 = u_t$, $y_1 = y$, and $y_2 = y_t$, we define the state variable  
$$
U = \begin{bmatrix} u_1 \\ u_2 \\ y_1 \\ y_2 \end{bmatrix}, \quad U_0 = \begin{bmatrix} u_0 \\ u_1 \\ y_0 \\ y_1 \end{bmatrix}.
$$

We also introduce 
$$
\Psi(s) = \begin{bmatrix} 0 \\ 0 \\ 0 \\ a_2g(s) \end{bmatrix}, \quad
\mathcal{B} U(t) = \begin{bmatrix} 0 \\ 0 \\ 0 \\ a_2 y_2 \end{bmatrix}, \quad
\mathcal{F}(U(t)) = \begin{bmatrix} 0 \\ f_1(u(x,t)) \\ 0 \\ f_2(y(x,t)) \end{bmatrix}.$$

Next, we define the linear operator $ \mathcal{A}: D(\mathcal{A}) \subset \mathcal{H} \to \mathcal{H}$ by  
$$
D(\mathcal{A}) = \left( (H^2(\Omega) \cap H_0^1(\Omega)) \times H_0^1(\Omega) \right)^2, \quad 
\mathcal{A} U = \begin{bmatrix} u_2 \\ \Delta u_1 - b y_2 - a_1 u_2 \\ y_2 \\ \Delta y_1 + b u_2 \end{bmatrix}.$$

It is well known that the operator $\mathcal{A}$ is non-negative and generates an exponentially stable contraction semigroup $(S(t))_{t \geq 0}$ on $\mathcal{H}$ (see \cite[Remark 3.3]{Alabau2017} and \cite[Theorem 3.4]{Gerbi2021}) under either Assumption \ref{assumptionsgerbi}  or Assumption \ref{assumptionsalabau}. More precisely, there exist constants $M > 0$ and $\alpha > 0$ such that  

\begin{equation} \label{exp_decay}
	\|S(t)\|_{\mathcal{L}(\mathcal{H})} \leq M e^{- \alpha t}, \quad \forall t \geq 0.
\end{equation}


Thus, the delayed coupled system \eqref{coupledstabilityproblem} can be rewritten in the following abstract form:  

\begin{equation}
	\label{cauchyproblem}
\begin{cases}
	\dot{U}(t) = \mathcal{A} U(t) - \mathcal{B} U(t-\tau) + \mathcal{F}(U(t)), & t > 0, \\
	U(0) = U_0, & \\
\mathcal{B}	U(s) = \Psi(s), & s \in [-\tau, 0].
\end{cases}
\end{equation}
Observe that, if  $U_0 \in \mathcal{H}$, then the following Duhamel formula holds: 
	\begin{equation}
	U(t)=S(t) U_0+\int_0^t S(t-s)  \mathcal{B} U(s-\tau) d s+\int_0^t S(t-s) \mathcal{F}(U(s)) d s.
	\end{equation}
We begin by establishing the local existence and uniqueness of solutions, as stated in the following lemma.
	\begin{lemma}
		\label{lemmalocalexistence}
		Let us consider the system \eqref{cauchyproblem} with initial data $U_0\in\mathcal{H}$ and $\Psi\in 
		C([-\tau, 0]; \mathcal{H}).$ Then, there exists a unique continuous local solution $U$ defined on
		a time interval $[0,\delta)$, for some $\delta>0$.
	\end{lemma}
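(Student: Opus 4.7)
The plan is to obtain local well-posedness via a Banach fixed-point argument applied directly to the Duhamel formula. The key observation is that for any $\delta\in(0,\tau]$, the delay term evaluated on $[0,\delta]$ is entirely determined by the prescribed history, since for $s\in[0,\delta]$ we have $s-\tau\in[-\tau,0]$ and $\mathcal{B}U(s-\tau)=\Psi(s-\tau)$. This decouples the delay and reduces the problem on $[0,\delta]$ to a semilinear Cauchy problem with a known time-dependent forcing coming from the history, plus the nonlinear term $\mathcal{F}$.

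More precisely, for $R>\|U_0\|_{\mathcal{H}}$ and $\delta\in(0,\tau]$ to be chosen, I would introduce the closed ball
$$B_{R,\delta}=\{\,U\in C([0,\delta];\mathcal{H}):\ \|U(t)\|_{\mathcal{H}}\le R\ \text{for all}\ t\in[0,\delta]\,\}$$
endowed with the sup-norm, and define on it the map
$$(\Phi U)(t)=S(t)U_0+\int_0^t S(t-s)\Psi(s-\tau)\,ds+\int_0^t S(t-s)\mathcal{F}(U(s))\,ds,$$
whose fixed points are exactly the mild solutions of \eqref{cauchyproblem} on $[0,\delta]$. Using the contraction bound $\|S(t)\|_{\mathcal{L}(\mathcal{H})}\le 1$, the continuity of $\Psi$, and the (to-be-specified) local Lipschitz continuity of $\mathcal{F}$ on $\mathcal{H}$ with constant $L_R$ on $B_{R,\delta}$, I would estimate
$$\|(\Phi U)(t)\|_{\mathcal{H}}\le \|U_0\|_{\mathcal{H}}+\delta\,\|\Psi\|_{C([-\tau,0];\mathcal{H})}+\delta\sup_{V\in B_{R,\delta}}\|\mathcal{F}(V)\|_{\mathcal{H}},$$
while, since the history contributions cancel in the difference,
$$\|\Phi U-\Phi V\|_{C([0,\delta];\mathcal{H})}\le \delta L_R\,\|U-V\|_{C([0,\delta];\mathcal{H})}.$$
Choosing $\delta>0$ small enough makes $\Phi$ a strict contraction sending $B_{R,\delta}$ into itself, so Banach's fixed-point theorem yields a unique $U\in C([0,\delta];\mathcal{H})$, i.e. a unique continuous local solution on $[0,\delta)$.

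The one substantive point I expect to be the main obstacle is the local Lipschitz continuity of the Nemytskii operator $U\mapsto\mathcal{F}(U)$ from $\mathcal{H}$ into $\mathcal{H}$. Since the two nontrivial components of $\mathcal{F}(U)$ are $f_1(u)$ and $f_2(y)$ with $u,y\in H_0^1(\Omega)$, this requires suitable growth assumptions on $f_1,f_2$ (for instance, $C^1$ nonlinearities with subcritical polynomial growth), to be imposed later; under those, Sobolev embeddings guarantee that $f_i(u)\in L^2(\Omega)$ with a locally Lipschitz dependence on $u\in H_0^1(\Omega)$. Once this structural assumption is accepted, the rest of the argument is entirely standard: the delay feedback, being linear and driven by a fixed history on the initial interval, contributes only an affine perturbation that introduces no additional analytical difficulty.
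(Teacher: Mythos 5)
Your proposal is correct and follows essentially the same route as the paper: the key observation in both is that for $t\in(0,\tau)$ the delayed term $\mathcal{B}U(t-\tau)$ equals the prescribed history $\Psi(t-\tau)$, so the system reduces on $[0,\tau)$ to a non-delayed semilinear Cauchy problem with a known forcing. The paper then simply invokes the classical theory of nonlinear semigroups (Pazy), whereas you unpack that citation into the standard contraction-mapping argument on the Duhamel formula; your remark that the local Lipschitz property of $\mathcal{F}$ must be supplied by later hypotheses on $f_1,f_2$ matches the paper, which indeed states those assumptions only after this lemma.
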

	\begin{proof} Note that for $t\in (0, \tau),$ $t-\tau\in (-\tau, 0).$ Then, for $t\in (0, \tau),$ we have $\mathcal{B} U(t-\tau)=\Psi(t-\tau)=a_2g(t-\tau)$ and 
		we can rewrite the abstract system \eqref{cauchyproblem} as an non-delayed problem:
		\begin{equation*}
			\begin{cases}
				\dot{U}(t) = \mathcal{A} U(t)  -a_2g(t-\tau) + \mathcal{F}(U(t)), & t\in(0,\tau), \\
				U(0) = U_0. & 
			\end{cases}
		\end{equation*}
		Then, we can apply the classical theory of nonlinear semigroups \cite{pazy2012semigroups}
		obtaining the existence of a unique solution on a set $[0,\delta)$, with $\delta\leq\tau$.
	\end{proof}
 In order to treat the global existence and uniqueness as well as the exponential stability of the system \eqref{cauchyproblem}, we make the following assumptions on $f_1$ and $f_2$:
 \begin{assumptions}
 	\label{nonlinearassumption}
$f_1,f_2$ are continuous functions such that
  \begin{enumerate}
  	\item $f_1(0)=f_2(0)=0$;
 \item  For all $r>0$ there exists a constant $L(r)>0$ such that, for all $u, v \in  H^{1}_{0}(\Omega)$ satisfying $\left\|\nabla u\right\|_{L^2(\Omega)} \leq r$ and $\left\|\nabla v\right\|_{L^2(\Omega)} \leq r$, one has
 $$
 \|f_i(u)-f_i(v)\|_{L^2(\Omega)} \leq L(r)\left\|\nabla u-\nabla v\right\|_{L^2(\Omega)}
 $$
 for $i=1,2$; and $L(r)\rightarrow 0$ as $r\rightarrow 0.$
 \item  There exist two  strictly increasing continuous functions $h_1,h_2: \mathbb{R}_{+} \rightarrow \mathbb{R}_{+},$ $h_1(0)=h_2(0)=0,$ such that 
 \begin{equation}
 	\label{hypothesisnonlinearfunc}
\left\vert \int_{\Omega} f_i(u) u\,dx \right\vert\leq h_i\left(\left\|\nabla u\right\|_{L^2(\Omega)}\right)\left\|\nabla u\right\|_{L^2(\Omega)}^2
  \end{equation}
 for all $u \in H^{1}_{0}(\Omega)$ and $i=1,2$.
   \end{enumerate}
 \end{assumptions}
 Furthermore, thanks to Hypothesis \ref{nonlinearassumption}, we have that  $\mathcal{F}(0)=0$ and for any $r>0$ there exists a constant $L(r)>0$ such that
 
 $$
 \|\mathcal{F}(U)-\mathcal{F}(V)\|_{\mathcal{H}} \leq L(r)\|U-V\|_{\mathcal{H}}
 $$
 
 whenever $\|U\|_{\mathcal{H}} \leq r$ and $\|V\|_{\mathcal{H}} \leq r$. In particular,
 
 $$
 \|\mathcal{F}(U)\|_{\mathcal{H}} \leq L(r)\|U\|_{\mathcal{H}}.
 $$
Let us define
 \begin{equation}
 	\label{definitionnonlinearfunc}
 	F_i(s):=\int_{0}^{s}f_i(r)\,dr,\quad \forall  s\in \mathbb{R},
 \end{equation} 
 for $i=1,2$.
 
Under Hypothesis \ref{nonlinearassumption}, we derive the following estimates for the nonlinear terms $\int_{\Omega}F_1(u(x))\,dx$ and $\int_{\Omega}F_2(u(x))\,dx$.  

\begin{lemma}
	\label{lemmanonlinear}
	Assume Hypothesis \ref{nonlinearassumption} holds. Then, for all $u\in H^1_0(\Omega)$ and $i=1,2$, we have  
	\begin{equation}
	\label{estimatenonlinearterm}
\left\vert\int_{\Omega}F_i(u(x))\,dx\right\vert\leq \frac{1}{2}h_i(\|\nabla u\|_{L^2(\Omega)})\|\nabla u\|_{L^2(\Omega)}^2.
\end{equation}
\end{lemma}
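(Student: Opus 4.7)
The plan is to use the fundamental theorem of calculus along a ray $t\mapsto tu(x)$ in order to reduce the estimate on $F_i$ to the assumed hypothesis \eqref{hypothesisnonlinearfunc} on $f_i$.

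Concretely, for fixed $u\in H_0^1(\Omega)$ I would introduce the auxiliary function
\begin{equation*}
\phi(t):=\int_{\Omega} F_i(tu(x))\,dx,\qquad t\in[0,1],
\end{equation*}
and observe that $\phi(0)=0$ and $\phi(1)=\int_{\Omega}F_i(u(x))\,dx$. Differentiating under the integral sign (justified by the continuity of $f_i$ together with the local Lipschitz condition in Hypothesis \ref{nonlinearassumption}(2), which provides a uniform $L^2$ bound on $f_i(tu)$ as $t$ varies in $[0,1]$) yields $\phi'(t)=\int_{\Omega}f_i(tu(x))u(x)\,dx$.

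Next I would apply hypothesis \eqref{hypothesisnonlinearfunc} to the function $tu\in H_0^1(\Omega)$, which gives
\begin{equation*}
\left\vert\int_{\Omega} f_i(tu)\,(tu)\,dx\right\vert\leq h_i\bigl(t\|\nabla u\|_{L^2(\Omega)}\bigr)\,t^{2}\|\nabla u\|_{L^2(\Omega)}^{2},
\end{equation*}
hence, dividing by $t>0$,
\begin{equation*}
|\phi'(t)|\leq t\,h_i\bigl(t\|\nabla u\|_{L^2(\Omega)}\bigr)\,\|\nabla u\|_{L^2(\Omega)}^{2}.
\end{equation*}
Using the monotonicity of $h_i$ (so that $h_i(t\|\nabla u\|_{L^2})\leq h_i(\|\nabla u\|_{L^2})$ for $t\in[0,1]$) and integrating from $0$ to $1$ I obtain
\begin{equation*}
\left\vert\int_{\Omega}F_i(u)\,dx\right\vert=|\phi(1)-\phi(0)|\leq h_i(\|\nabla u\|_{L^2(\Omega)})\,\|\nabla u\|_{L^2(\Omega)}^{2}\int_0^1 t\,dt,
\end{equation*}
which gives precisely the $\tfrac{1}{2}$ factor in \eqref{estimatenonlinearterm}.

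The only delicate point is the differentiation of $\phi$, which requires a dominated convergence argument; this is made routine by Hypothesis \ref{nonlinearassumption}(2), since for $t,t'\in[0,1]$ the bound $\|f_i(tu)-f_i(t'u)\|_{L^2(\Omega)}\leq L(\|\nabla u\|_{L^2})\,|t-t'|\,\|\nabla u\|_{L^2}$ provides uniform control. All other steps are just the monotonicity of $h_i$ and an elementary one-dimensional integration, so I do not anticipate any substantive obstacle.
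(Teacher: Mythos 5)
Your argument is correct and is essentially the paper's own proof: both parametrize along the ray $s\mapsto su$, apply hypothesis \eqref{hypothesisnonlinearfunc} to $su$, divide by $s$, invoke the monotonicity of $h_i$, and integrate $\int_0^1 s\,ds=\tfrac12$. The only cosmetic difference is that you package this via the auxiliary function $\phi$ and explicitly justify differentiation under the integral sign, whereas the paper writes the same computation directly as an iterated integral.
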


\begin{proof}
	Let $u\in H^1_0(\Omega)$ and $i\in\{1,2\}$. Using the identity  
	$$  
	\frac{d}{ds} F_i(su) = F_i^{\prime}(su)u = f_i(su)u,  
	$$  
	we obtain  
	$$  
	\int_{\Omega}F_i(u(x))\,dx = \int_{\Omega} \int_{0}^{1} f_i(su)u\,ds\,dx = \int_{0}^{1} \int_{\Omega} f_i(su)su\,dx\,\frac{ds}{s}.  
	$$  
	Applying \eqref{hypothesisnonlinearfunc}, we deduce  
	$$  
	\begin{aligned}
		\left\vert\int_{\Omega}F_i(u(x))\,dx \right\vert &\leq \int_{0}^{1} h_i(\|\nabla(su)\|_{L^2(\Omega)})\|\nabla(su)\|_{L^2(\Omega)}^2\,\frac{ds}{s} \\
		&\leq \int_{0}^{1} h_i(\|\nabla(su)\|_{L^2(\Omega)})s^2\|\nabla u\|_{L^2(\Omega)}^2\,\frac{ds}{s} \\
		&\leq \frac{1}{2}h_i(\|\nabla u\|_{L^2(\Omega)})\|\nabla u\|_{L^2(\Omega)}^2.
	\end{aligned}
	$$  
\end{proof}
In the spirit of \cite{PP2022}, in order to have the existence and uniqueness of a global solution of the system \eqref{cauchyproblem},
we introduce the following energy functional associated to system \eqref{coupledstabilityproblem}:
\begin{definition}
	Let $(u,y)$  be a mild solution of \eqref{coupledstabilityproblem} and define its energy by
	\begin{equation}
		\begin{aligned}
			E(t)&=\frac{1}{2}\int_{\Omega}\left\{u_t^2(x,t)+|\nabla u(x,t)|^2+y_t^2(x,t)+|\nabla y(x,t)|^2\right\}\,dx\\
			&-\int_{\Omega}F_1(u(x,t))\,dx-\int_{\Omega}F_2(y(x,t))\,dx+\frac{1}{2}\int_{\Omega}\int_{t-\tau}^{t}|a_2(x)|y_t^2(x,s)\,ds\,dx.
		\end{aligned}
	\end{equation}
\end{definition}
We can deduce a first estimate for small enough initial data.
\begin{lemma}
	\label{lemmaminenergy1}
	Assume Hypothesis \ref{nonlinearassumption}.  Let $U=(u, u_t, y, y_t)$ be a non-trivial solution of  \eqref{cauchyproblem} defined on an interval $[0, T),$ for some $T>0.$
 If $h_1(\|\nabla u_0\|_{L^2(\Omega)})<\frac{1}{2}$ and  $h_2(\|\nabla y_0\|_{L^2(\Omega)})<\frac{1}{2}$, then $E(0)>0$.
\end{lemma}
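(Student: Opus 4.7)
My plan is to evaluate $E(0)$ directly from its definition, absorb the two possibly negative contributions $\int_{\Omega}F_i$ by means of Lemma~\ref{lemmanonlinear}, and then use the smallness hypotheses on $h_1,h_2$ to recognise the right-hand side as a sum of non-negative summands, at least one of which is strictly positive thanks to the non-triviality of $U$ combined with the uniqueness statement of Lemma~\ref{lemmalocalexistence}.

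Setting $t=0$ in the definition of $E$ and noting that $y_t(x,s)=g(x,s)$ on $[-\tau,0]$, one has
\begin{equation*}
\begin{aligned}
E(0) \;=\;& \frac{1}{2}\int_{\Omega}\!\bigl(u_1^2+|\nabla u_0|^2+y_1^2+|\nabla y_0|^2\bigr)\,dx \\
&-\int_{\Omega}F_1(u_0)\,dx-\int_{\Omega}F_2(y_0)\,dx+\frac{1}{2}\int_{\Omega}\!\int_{-\tau}^{0}|a_2(x)|\,g^2(x,s)\,ds\,dx.
\end{aligned}
\end{equation*}
Applying Lemma~\ref{lemmanonlinear} to $|\int_{\Omega}F_1(u_0)|$ and $|\int_{\Omega}F_2(y_0)|$ yields
\begin{equation*}
\begin{aligned}
E(0)\;\geq\;&\frac{1}{2}\|u_1\|_{L^2(\Omega)}^2+\frac{1}{2}\|y_1\|_{L^2(\Omega)}^2
+\frac{1}{2}\bigl(1-h_1(\|\nabla u_0\|_{L^2(\Omega)})\bigr)\|\nabla u_0\|_{L^2(\Omega)}^2 \\
&+\frac{1}{2}\bigl(1-h_2(\|\nabla y_0\|_{L^2(\Omega)})\bigr)\|\nabla y_0\|_{L^2(\Omega)}^2
+\frac{1}{2}\int_{\Omega}\!\int_{-\tau}^{0}|a_2(x)|\,g^2(x,s)\,ds\,dx.
\end{aligned}
\end{equation*}
Under the hypotheses $h_1(\|\nabla u_0\|_{L^2(\Omega)})<1/2$ and $h_2(\|\nabla y_0\|_{L^2(\Omega)})<1/2$, the coefficients $1-h_i(\cdot)$ exceed $1/2$, so every one of the five summands on the right is non-negative and $E(0)\geq 0$.

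Strict positivity follows by contradiction: if $E(0)=0$, all five summands must vanish, forcing $u_1=y_1=0$ in $L^2(\Omega)$, $u_0=y_0=0$ in $H_0^1(\Omega)$, and $a_2(x)g(x,s)=0$ a.e.~on $\Omega\times[-\tau,0]$. Then in the abstract formulation \eqref{cauchyproblem} one has $U_0=0$ and $\Psi\equiv 0$, so on $[0,\tau)$ the problem reduces to the homogeneous semilinear Cauchy problem treated in Lemma~\ref{lemmalocalexistence} with zero data, whose unique solution is $U\equiv 0$ on $[0,\tau)$; iterating on $[\tau,2\tau),\ldots$ gives $U\equiv 0$ on $[0,T)$, contradicting non-triviality. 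Hence $E(0)>0$.

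The main subtlety is the last step: non-triviality of $U$ as a function does not immediately imply that $g$ itself is non-zero, since the weight $|a_2|$ may annihilate part of the history. The argument must therefore be phrased in terms of the forcing $\Psi(s)=(0,0,0,a_2 g(s))^{\top}$ that actually drives \eqref{cauchyproblem}, and the uniqueness conclusion of Lemma~\ref{lemmalocalexistence} is what rules out the degenerate scenario $U_0=0$, $a_2 g\equiv 0$.
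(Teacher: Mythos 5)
Your proof is correct and follows essentially the same route as the paper: both absorb the terms $\int_{\Omega}F_i$ via Lemma~\ref{lemmanonlinear} and the hypotheses $h_i(\cdot)<\frac{1}{2}$ so as to bound $E(0)$ from below by a sum of non-negative quadratic quantities in the data. Your final step is in fact more careful than the paper's, which asserts a strict inequality and then simply invokes non-triviality of $U$; your explicit reduction, via the uniqueness in Lemma~\ref{lemmalocalexistence}, of the degenerate case $U_0=0$, $a_2g\equiv 0$ to the trivial solution is precisely the argument the paper leaves implicit.
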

\begin{proof}
	From Lemma \ref{lemmanonlinear}, we can infer that
	$$\int_{\Omega}F_1(u_0(x))\,dx\leq \frac{1}{2}h_1(\|\nabla u_0\|_{L^2(\Omega)})\|\nabla u_0\|_{L^2(\Omega)}^2\leq\frac{1}{4}\|\nabla u_0\|_{L^2(\Omega)}^2$$
	and 
	$$\int_{\Omega}F_2(y_0(x))\,dx\leq \frac{1}{2}h_2(\|\nabla y_0\|_{L^2(\Omega)})\|\nabla y_0\|_{L^2(\Omega)}^2\leq\frac{1}{4}\|\nabla y_0\|_{L^2(\Omega)}^2.$$
	As a consequence,
	$$	\begin{aligned}
		E(0)&=\frac{1}{2}\|u_1\|_{L^2(\Omega)}^2+\frac{1}{2}\|\nabla u_0\|_{L^2(\Omega)}^2+\frac{1}{2}\|y_1\|_{L^2(\Omega)}^2+\frac{1}{2}\|\nabla y_0\|_{L^2(\Omega)}^2\\
		&-\int_{\Omega}F_1(u_0(x))\,dx-\int_{\Omega}F_2(y_0(x))\,dx+\frac{1}{2}\int_{-\tau}^{0}\|\sqrt{|a_2|}y_t(s)\|_{L^2(\Omega)}^2\,ds\,dx\\
		&>\frac{1}{4}\|u_1\|_{L^2(\Omega)}^2+\frac{1}{4}\|\nabla u_0\|_{L^2(\Omega)}^2+\frac{1}{4}\|y_1\|_{L^2(\Omega)}^2+\frac{1}{4}\|\nabla y_0\|_{L^2(\Omega)}^2\\
		&+\frac{1}{4}\int_{-\tau}^{0}\|\sqrt{|a_2|}y_t(s)\|_{L^2(\Omega)}^2\,ds\,dx.
	\end{aligned}$$
	Therefore, since $U$ is a non-zero solution, we have proven that $E(0)>0$.
\end{proof}
We can also obtain the following preliminary estimate.
\begin{proposition}
	\label{Propmajenergy}
	Let $(u,y)$ be a solution to \eqref{coupledstabilityproblem} defined on an interval $[0, T),$ for some $T>0.$ If $E(t)\geq \frac{1}{4}\|y_t(t)\|^2_{L^2(\Omega)}$, for any $t\in [0, T)$, then 
	\begin{equation}
		\label{estimateenergy}
		E(t)\leq C(t)E(0),
	\end{equation}
	for any $t\geq 0$, where 
	\begin{equation}
		\label{functionC}
		C(t)=e^{4\|a_2\|_{L^{\infty}(\Omega)}t}.
	\end{equation}
\end{proposition}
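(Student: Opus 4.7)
The plan is to differentiate $E(t)$ along the solution, use the PDE to eliminate the second time derivatives, and show that the resulting expression is bounded by $4\|a_2\|_{L^\infty(\Omega)}E(t)$, after which Gronwall's inequality immediately yields the claim with $C(t)=e^{4\|a_2\|_{L^\infty(\Omega)}t}$.

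More precisely, I would first compute
\[
E'(t)=\int_{\Omega}\bigl(u_t u_{tt}+\nabla u\cdot\nabla u_t+y_t y_{tt}+\nabla y\cdot\nabla y_t\bigr)\,dx-\int_{\Omega}f_1(u)u_t\,dx-\int_{\Omega}f_2(y)y_t\,dx+\tfrac{1}{2}\int_{\Omega}|a_2(x)|\bigl(y_t^2(x,t)-y_t^2(x,t-\tau)\bigr)\,dx,
\]
substitute $u_{tt}$ and $y_{tt}$ from \eqref{coupledstabilityproblem}, and integrate by parts (here the boundary terms vanish because $u,y\in H^1_0(\Omega)$). The gradient terms and the source terms cancel, the coupling terms $\pm b\,u_t y_t$ cancel each other, and what remains is
\[
E'(t)=-\int_{\Omega}a_1(x)u_t^2\,dx-\int_{\Omega}a_2(x)y_t(x,t)y_t(x,t-\tau)\,dx+\tfrac{1}{2}\int_{\Omega}|a_2(x)|\bigl(y_t^2(x,t)-y_t^2(x,t-\tau)\bigr)\,dx.
\]

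The key algebraic step is to handle the cross term in $y_t(t)y_t(t-\tau)$, for which I apply Young's inequality in the sharp form $|a_2 y_t(t)y_t(t-\tau)|\leq\tfrac{1}{2}|a_2|y_t^2(t)+\tfrac{1}{2}|a_2|y_t^2(t-\tau)$. This is precisely designed so that the $y_t^2(t-\tau)$ contributions cancel against the negative piece of the delay integral, while the two $y_t^2(t)$ contributions add up. Dropping the favorable term $-\int_\Omega a_1 u_t^2\,dx\leq 0$, I obtain
\[
E'(t)\leq\int_{\Omega}|a_2(x)|y_t^2(x,t)\,dx\leq\|a_2\|_{L^\infty(\Omega)}\|y_t(t)\|_{L^2(\Omega)}^2.
\]

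At this point the standing hypothesis $E(t)\geq\tfrac{1}{4}\|y_t(t)\|_{L^2(\Omega)}^2$ kicks in, giving $\|y_t(t)\|_{L^2(\Omega)}^2\leq 4E(t)$ and hence $E'(t)\leq 4\|a_2\|_{L^\infty(\Omega)}E(t)$ on $[0,T)$. Integrating this differential inequality (Gronwall's lemma) produces \eqref{estimateenergy} with the stated $C(t)$. The only genuine obstacle is the cross term between $y_t(t)$ and $y_t(t-\tau)$: without the carefully chosen weight $|a_2|$ in the extra history integral in the definition of $E$, one could not cancel the $y_t^2(t-\tau)$ contribution, and the bound would have to involve the history of the delayed variable rather than just $E(t)$ itself. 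The computation should be performed first for strong solutions and then extended to mild solutions by a standard density argument.
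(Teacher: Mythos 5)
Your proposal is correct and follows essentially the same route as the paper: differentiate $E$, substitute the equations, bound the cross term $-\int_\Omega a_2\,y_t(t)y_t(t-\tau)\,dx$ by Young's inequality so that the $y_t^2(t-\tau)$ contribution cancels against the history-integral term, drop $-\int_\Omega a_1 u_t^2\,dx$, and conclude via the standing hypothesis and Gronwall. The only difference is presentational: the paper passes directly from the identity for $E'(t)$ to the bound $\int_\Omega |a_2|y_t^2(x,t)\,dx$ without displaying the Young's inequality step you make explicit.
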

\begin{proof}
	Differentiating formally $E$ with respect to $t$, we
	obtain
	$$\begin{aligned}
		E^{\prime}(t)&=\int_{\Omega}\left\{u_t(x,t) u_{tt}(x,t)+\nabla u(x,t)\nabla u_t(x,t) +y_t(x,t)y_{tt}(x,t)+\nabla y(x,t)\nabla y_t(x,t)\right\}\,dx\\
		&-\int_{\Omega}u_t(x,t)f_1(u(x,t))\,dx-\int_{\Omega}y_t(x,t)f_2(y(x,t))\,dx+\frac{1}{2}\int_{\Omega}|a_2(x)|y_t^2(x,t)\,dx\\
		&-\frac{1}{2}\int_{\Omega}|a_2(x)|y_t^2(x,t-\tau)\,dx.
	\end{aligned}$$
	By using the equations satisfied by $u$ and $y$ in  \eqref{coupledstabilityproblem} and Green formula, we get
	$$\begin{aligned}
		E^{\prime}(t)&=\int_{\Omega}\left\{u_t(x,t) \left(\Delta u(x,t)-by_t(x,t)-a_1(x)u_t(x,t)+f_1(u(x,t))\right)+\nabla u(x,t)\nabla u_t(x,t)\right.\\
		&\left.+y_t(x,t)\left(\Delta y(x,t)+b u_{t}(x,t)-a_2(x)y_t(x,t-\tau)+f_2(y(x,t))\right)+\nabla y(x,t)\nabla y_t(x,t)\right\}\,dx\\
		&-\int_{\Omega}u_t(x,t)f_1(u(x,t))\,dx-\int_{\Omega}y_t(x,t)f_2(y(x,t))\,dx+\frac{1}{2}\int_{\Omega}|a_2(x)|y_t^2(x,t)\,dx\\
		&-\frac{1}{2}\int_{\Omega}|a_2(x)|y_t^2(x,t-\tau)\,dx\\
		=&-\int_{\Omega}a_1(x)u_t^2(x,t)\,dx-
		\int_{\Omega}a_2(x)y_t(x,t)y_t(x,t-\tau)\,dx+\frac{1}{2}\int_{\Omega}|a_2(x)|y_t^2(x,t)\,dx\\
		&-\frac{1}{2}\int_{\Omega}|a_2(x)|y_t^2(x,t-\tau)\,dx\\
		&\leq\int_{\Omega}|a_2(x)|y_t^2(x,t)\,dx\leq \|a_2\|_{L^{\infty}(\Omega)}\|y_t(t)\|_{L^2(\Omega)}^2.
	\end{aligned}$$
	Since we have assumed $E(t)\geq \frac{1}{4}\|y_t(t)\|_{L^2(\Omega)}^2$
	for any $t\in [0, T)$, we obtain
	$$E^{\prime}(t)\leq 4\|a_2\|_{L^{\infty}(\Omega)}E(t).$$
	By Gronwall's inequality, we then obtain \eqref{estimateenergy}.
\end{proof}
 Next, we can give the following estimate from below for sufficiently small initial data.
\begin{lemma}
	\label{lemmaminenergy}
	Assume Hypothesis \ref{nonlinearassumption}. Let $U$ be a non-trivial solution of  \eqref{cauchyproblem} defined on an interval $[0,\delta)$, and fix a time $T\geq\delta$.  Then, 
   if $h_1(\|\nabla u_0\|_{L^2(\Omega)})<\frac{1}{2}$,  $h_2(\|\nabla y_0\|_{L^2(\Omega)})<\frac{1}{2}$ and $h_i(2\sqrt{C(T)E(0)})<\frac{1}{2}$ for $i=1,2$,  with $C(\cdot)$ defined as in \eqref{functionC},  then 
		\begin{equation}
			\label{minorationenergy}
			\begin{aligned}
				E(t)&>\frac{1}{4}\|u_t(t)\|^2_{L^2(\Omega)}+\frac{1}{4}\|\nabla u(t)\|^2_{L^2(\Omega)}+\frac{1}{4}\|y_t(t)\|^2_{L^2(\Omega)}+\frac{1}{4}\|\nabla y(t)\|^2_{L^2(\Omega)}\\
				&+\frac{1}{4}\int_{t-\tau}^{t}\|\sqrt{|a_2|}y_t(s)\|^2_{L^2(\Omega)}\,ds
			\end{aligned}
		\end{equation}
		for all $t\in[0,\delta)$. In particular,
		$$E(t)>\frac{1}{4}\|U(t))\|^2_{\mathcal{H}},\quad  \forall t\in[0,\delta).$$
\end{lemma}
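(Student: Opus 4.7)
The plan is to run a continuity (bootstrap) argument based on two observations: (i) the assumption $h_i < \tfrac12$ evaluated at the current gradients immediately yields the claimed lower bound on $E(t)$ via Lemma \ref{lemmanonlinear}; (ii) the conclusion of Proposition \ref{Propmajenergy} forces $\|\nabla u(t)\|_{L^2(\Omega)}, \|\nabla y(t)\|_{L^2(\Omega)}<2\sqrt{C(T)E(0)}$, so the strict monotonicity of $h_i$ together with the standing hypothesis $h_i(2\sqrt{C(T)E(0)})<\tfrac12$ propagates (i) forward in time.

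\medskip

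Concretely, I first define
$$
T^{*}:=\sup\Bigl\{t\in[0,\delta) : h_1(\|\nabla u(s)\|_{L^2(\Omega)})<\tfrac12 \text{ and } h_2(\|\nabla y(s)\|_{L^2(\Omega)})<\tfrac12,\ \forall s\in[0,t]\Bigr\}.
$$
By the hypotheses at $t=0$ and the continuity of $s\mapsto(\nabla u(s),\nabla y(s))$ in $L^2(\Omega)$ (a mild solution of \eqref{cauchyproblem} lives in $\mathcal{H}$), we have $T^{*}>0$. On $[0,T^{*})$, combining the definition of $E$ with Lemma \ref{lemmanonlinear} and the strict inequalities $h_i(\|\nabla\cdot\|)<\tfrac12$ will yield \eqref{minorationenergy} at each such $t$; in particular $E(t)>\tfrac14\|y_t(t)\|_{L^2(\Omega)}^2$ holds on $[0,T^{*})$.

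\medskip

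Next, Proposition \ref{Propmajenergy} is applicable on $[0,T^{*})$ and gives $E(t)\leq C(t)E(0)\leq C(T)E(0)$ since $T^{*}\leq\delta\leq T$. Combining this with the lower bound \eqref{minorationenergy} just established, I obtain
$$
\|\nabla u(t)\|_{L^2(\Omega)}^{2}<4E(t)\leq 4C(T)E(0),\qquad \|\nabla y(t)\|_{L^2(\Omega)}^{2}<4C(T)E(0),\quad \forall t\in[0,T^{*}).
$$
Assume for contradiction that $T^{*}<\delta$. By continuity, the same bounds pass to the limit $t=T^{*}$ with a non-strict inequality, so using the strict monotonicity of $h_1,h_2$ and the standing assumption $h_i(2\sqrt{C(T)E(0)})<\tfrac12$,
$$
h_i(\|\nabla u(T^{*})\|_{L^2(\Omega)})\leq h_i(2\sqrt{C(T)E(0)})<\tfrac12,
$$
and analogously for $\nabla y(T^{*})$. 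A further continuity argument then extends the two strict inequalities $h_i<\tfrac12$ to an interval $[0,T^{*}+\varepsilon)$, contradicting the maximality of $T^{*}$. Hence $T^{*}=\delta$, and \eqref{minorationenergy} holds throughout $[0,\delta)$; the final bound $E(t)>\tfrac14\|U(t)\|_{\mathcal{H}}^{2}$ is just a rewriting of \eqref{minorationenergy} dropping the delay term.

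\medskip

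The main obstacle is the logical circularity: Proposition \ref{Propmajenergy} requires a priori a lower bound on $E$, while the lower bound we want depends on an upper bound on the gradient norms that is supplied by that very proposition. The bootstrap on the set where $h_i(\|\nabla\cdot\|)<\tfrac12$, together with the quantitative gap provided by the strict inequality $h_i(2\sqrt{C(T)E(0)})<\tfrac12$, is precisely the device that breaks this loop.
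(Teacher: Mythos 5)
Your proposal is correct and follows essentially the same continuity/bootstrap contradiction argument as the paper: both define a maximal time up to which a strict smallness condition holds, use Lemma \ref{lemmanonlinear} to convert $h_i<\tfrac12$ into the energy lower bound, invoke Proposition \ref{Propmajenergy} to control the gradients by $2\sqrt{C(T)E(0)}$, and derive a contradiction with maximality. The only cosmetic difference is that the paper's bootstrap set is defined by the inequality \eqref{minorationenergy} itself rather than by the conditions $h_i(\|\nabla u(s)\|_{L^2(\Omega)})<\tfrac12$, $h_i(\|\nabla y(s)\|_{L^2(\Omega)})<\tfrac12$.
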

\begin{proof}
	We argue by contradiction. Let us denote
	$$r:=\sup\{s\in [0,\delta)\text{ : \eqref{minorationenergy} holds } \forall t\in[0,s)\}.$$
	We suppose by contradiction that $r<\delta$. Then, by continuity, we have
	$$	\begin{aligned}
		E(r)&=\frac{1}{4}\|u_t(r)\|^2_{L^2(\Omega)}+\frac{1}{4}\|\nabla u(r)\|^2_{L^2(\Omega)}+\frac{1}{4}\|y_t(r)\|^2_{L^2(\Omega)}+\frac{1}{4}\|\nabla y(r)\|^2_{L^2(\Omega)}\\
		&+\frac{1}{4}\int_{r-\tau}^{r}\|\sqrt{|a_2|}y_t(s)\|^2_{L^2(\Omega)}\,ds.
	\end{aligned}$$
	Then, we can see that 
	$$E(r)\geq \frac{1}{4}\|y_t(r)\|^2_{L^2(\Omega)},\quad E(r)\geq \frac{1}{4}\|\nabla u(r)\|^2_{L^2(\Omega)}\quad\text{and}\quad E(r)\geq \frac{1}{4}\|\nabla y(r)\|^2_{L^2(\Omega)}.$$ 
	Using the fact that $h_i$ for $i=1,2$ are increasing functions and Proposition \ref{Propmajenergy}, we get
	$$h_1(\|\nabla u(r)\|_{L^2(\Omega)})\leq h_1(2\sqrt{E(r)})\leq h_1(2\sqrt{C(r)E(0)})<\frac{1}{2}$$
	and 
	$$h_2(\|\nabla y(r)\|_{L^2(\Omega)})\leq h_2(2\sqrt{E(r)})\leq h_2(2\sqrt{C(r)E(0)})<\frac{1}{2}.$$
	Hence using the definition of the energy and \eqref{estimatenonlinearterm}, we conclude that
	$$	\begin{aligned}
		E(r)&=\frac{1}{2}\|u_t(r)\|_{L^2(\Omega)}^2+\frac{1}{2}\|\nabla u(r)\|_{L^2(\Omega)}^2+\frac{1}{2}\|y_t(r)\|_{L^2(\Omega)}^2+\frac{1}{2}\|\nabla y(r)\|_{L^2(\Omega)}^2\\
		&-\int_{\Omega}F_1(u(x,r))\,dx-\int_{\Omega}F_2(y(x,r))\,dx+\frac{1}{2}\int_{r-\tau}^{r}\|\sqrt{|a_2|}y_t(s)\|_{L^2(\Omega)}^2\,ds\,dx\\
		&>\frac{1}{4}\|u_t(r)\|_{L^2(\Omega)}^2+\frac{1}{4}\|\nabla u(r)\|_{L^2(\Omega)}^2+\frac{1}{4}\|y_t(r)\|_{L^2(\Omega)}^2+\frac{1}{4}\|\nabla y(r)\|_{L^2(\Omega)}^2\\
		&+\frac{1}{4}\int_{r-\tau}^{r}\|\sqrt{|a_2|}y_t(s)\|_{L^2(\Omega)}^2\,ds\,dx.
	\end{aligned}$$
	This contradicts the maximality 
	of $r$. Hence, $r=\delta$ and this concludes the proof of the lemma.
\end{proof}
Next, we establish an exponential decay result for the abstract model \eqref{cauchyproblem} under an appropriate  assumption on the function $a_2$. Let us consider the following hypothesis
\begin{assumptions}
	\label{assumptionona2}
	The function $a_2\in L^{\infty}(\Omega)$ satisfies 
	$$e^{\alpha\tau}\,\|a_2\|_{L^{\infty}(\Omega)}<\frac{\alpha}{M},$$
with $M$ and $\alpha$ as in \eqref{exp_decay}.
\end{assumptions}
\begin{theorem}
	\label{Theoremexpstability}
 Assume that either Assumption \ref{assumptionsgerbi} or Assumption \ref{assumptionsalabau} holds, together with Hypotheses \ref{nonlinearassumption} and \ref{assumptionona2}.  	Let  $U \in C([0,T) ; \mathcal{H})$ a solution of the system \eqref{cauchyproblem}  satisfying $\|U(t)\|_{\mathcal{H}} \leq C_\rho$ for all $t\in[0,T)$ with $C_{\rho}$ such that  $L\left(C_\rho\right)<\frac{\alpha-M\|a_2\|_{L^{\infty}(\Omega)}e^{\alpha\tau}}{M}$.
Then, the  following exponential decay estimate holds
	\begin{equation}
		\label{ineqexpstability}
	\|U(t)\|_{\mathcal{H}} \leq M\left(\left\|U_0\right\|_{\mathcal{H}}+\int_0^\tau e^{\alpha s}\|\Psi(s-\tau)\|_{\mathcal{H}} d s\right) e^{-\left(\alpha-M\|a_2\|_{L^{\infty}(\Omega)}e^{\alpha\tau}-M L\left(C_\rho\right)\right) t},
		\end{equation}
	for any $t\in[0,T)$.
\end{theorem}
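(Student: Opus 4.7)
The plan is to start from the Duhamel representation of the mild solution, take norms, invoke the exponential bound \eqref{exp_decay} for the underlying semigroup $S(t)$, and then reduce to a scalar integral inequality to which Gronwall's lemma can be applied.

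First, I would take the $\mathcal{H}$-norm in the Duhamel formula and use $\|S(t)\|_{\mathcal{L}(\mathcal{H})}\leq M e^{-\alpha t}$ to get
\begin{equation*}
\|U(t)\|_{\mathcal{H}}\leq M e^{-\alpha t}\|U_0\|_{\mathcal{H}}+M\int_0^t e^{-\alpha(t-s)}\|\mathcal{B}U(s-\tau)\|_{\mathcal{H}}\,ds+M\int_0^t e^{-\alpha(t-s)}\|\mathcal{F}(U(s))\|_{\mathcal{H}}\,ds.
\end{equation*}
For the nonlinear term, the hypothesis $\|U(t)\|_{\mathcal{H}}\leq C_\rho$ together with the local Lipschitz property derived from Hypothesis \ref{nonlinearassumption} (and recalled just after it) gives $\|\mathcal{F}(U(s))\|_{\mathcal{H}}\leq L(C_\rho)\|U(s)\|_{\mathcal{H}}$. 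For the delay term, the definition of $\mathcal{B}$ yields $\|\mathcal{B}V\|_{\mathcal{H}}\leq\|a_2\|_{L^\infty(\Omega)}\|V\|_{\mathcal{H}}$ for any $V\in\mathcal{H}$.

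Next, I would split the delay integral according to the history condition $\mathcal{B}U(s)=\Psi(s)$ on $[-\tau,0]$: the portion $s\in[0,\min(t,\tau)]$ contributes $\|\Psi(s-\tau)\|_{\mathcal{H}}$, while the portion $s\in[\tau,t]$ (empty if $t\leq\tau$) contributes $\|a_2\|_{L^\infty(\Omega)}\|U(s-\tau)\|_{\mathcal{H}}$. After the change of variable $\sigma=s-\tau$ in the latter, the factor $e^{-\alpha(t-s)}$ becomes $e^{\alpha\tau}e^{-\alpha(t-\sigma)}$. Multiplying the whole inequality by $e^{\alpha t}$ and setting $v(t):=e^{\alpha t}\|U(t)\|_{\mathcal{H}}$, I obtain the scalar inequality
\begin{equation*}
v(t)\leq M\|U_0\|_{\mathcal{H}}+M\int_0^\tau e^{\alpha s}\|\Psi(s-\tau)\|_{\mathcal{H}}\,ds+M\bigl(\|a_2\|_{L^\infty(\Omega)}e^{\alpha\tau}+L(C_\rho)\bigr)\int_0^t v(s)\,ds,
\end{equation*}
where I have used $\int_0^{t-\tau}\leq\int_0^t$ to merge the two integral terms (and in the case $t\leq\tau$ the delay integral is absent, so the same bound still holds).

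Finally, an application of the standard Gronwall inequality to $v$ yields
\begin{equation*}
v(t)\leq M\Bigl(\|U_0\|_{\mathcal{H}}+\int_0^\tau e^{\alpha s}\|\Psi(s-\tau)\|_{\mathcal{H}}\,ds\Bigr)e^{M(\|a_2\|_{L^\infty(\Omega)}e^{\alpha\tau}+L(C_\rho))t},
\end{equation*}
and dividing back by $e^{\alpha t}$ gives exactly \eqref{ineqexpstability}. Assumption \ref{assumptionona2} together with the smallness condition $L(C_\rho)<(\alpha-M\|a_2\|_{L^\infty(\Omega)}e^{\alpha\tau})/M$ guarantees that the exponent $\alpha-M\|a_2\|_{L^\infty(\Omega)}e^{\alpha\tau}-ML(C_\rho)$ is strictly positive, so the estimate is genuinely decaying. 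The only subtlety that requires care is the correct bookkeeping of the delay integral—handling the history data on $[0,\tau]$ separately from the solution contribution on $[\tau,t]$ and performing the change of variable that produces the crucial $e^{\alpha\tau}$ factor; once this is done, the proof reduces to a standard Gronwall argument.
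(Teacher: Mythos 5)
Your proposal is correct and follows essentially the same route as the paper: Duhamel's formula, the semigroup bound \eqref{exp_decay}, splitting the delay integral into the history part on $[0,\tau]$ and the solution part on $[\tau,t]$ with the change of variable producing the $e^{\alpha\tau}$ factor, and then Gronwall applied to $e^{\alpha t}\|U(t)\|_{\mathcal{H}}$. The only cosmetic difference is that you treat the cases $t<\tau$ and $t\geq\tau$ in a single unified inequality, whereas the paper writes them out as two separate cases; the content is identical.
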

\begin{proof}
	Applying Duhamel's formula, we obtain
	\begin{equation}
		\label{ineqDuhamel}
		\begin{aligned}
			\|U(t)\|_{\mathcal{H}} \leq & M e^{-\alpha t}\left\|U_0\right\|_{\mathcal{H}}+M e^{-\alpha t} \int_0^t e^{ \alpha s} \|\mathcal{B} U(s-\tau)\|_{\mathcal{H}} d s \\
			& +M L\left(C_\rho\right) e^{- \alpha t} \int_0^t e^{ \alpha s}\|U(s)\|_{\mathcal{H}} d s,
		\end{aligned}
	\end{equation}
	where we have used the assumption $\|F(U(t))\|_{\mathcal{H}} \leq L(C_\rho)\|U(t)\|_{\mathcal{H}}$ for all $t \geq 0$.
	
	We distinguish the following two cases:
	
	\begin{itemize}
		\item \textbf{Case $\tau \leq t$} : Using \eqref{ineqDuhamel}, we obtain
		\begin{equation*}
			\begin{aligned}
				& \|U(t)\|_{\mathcal{H}} \leq M e^{- \alpha t}\left\|U_0\right\|_{\mathcal{H}}+M e^{- \alpha t} \int_0^\tau e^{ \alpha s}\|\Psi(s-\tau)\|_{\mathcal{H}} d s \\
				& \quad+M e^{- \alpha t} \int_\tau^t e^{ \alpha s} \|a_2\|_{L^{\infty}(\Omega)} \|U(s-\tau)\|_{\mathcal{H}} d s+M L(C_\rho) e^{- \alpha t} \int_0^t e^{ \alpha s}\|U(s)\|_{\mathcal{H}} d s.
			\end{aligned}
		\end{equation*}
		By applying the change of variables $s-\tau=z$  in the second integral, it follows that
		\begin{equation*}
			\begin{aligned}
				& e^{ \alpha t}\|U(t)\|_{\mathcal{H}} \leq M\| U_0\|_{\mathcal{H}}+M\int_0^\tau e^{ \alpha s} \| \Psi(s-\tau) \|_{\mathcal{H}} d s \\
				&+M \|a_2\|_{L^{\infty}(\Omega)} e^{ \alpha \tau} \int_0^t e^{ \alpha z}\|U(z)\|_{\mathcal{H}} d z+M L(C_\rho) \int_0^t e^{ \alpha s}\|U(s)\|_{\mathcal{H}} d s.
			\end{aligned}
		\end{equation*}
		Defining $\tilde{u}(t):=e^{ \alpha t}\|U(t)\|_{\mathcal{H}}$ and $M_0:=M\| U_0\|_{\mathcal{H}}+M\int_0^\tau e^{ \alpha s} \|\Psi(s-\tau) \|_{\mathcal{H}} d s$, we get
		\begin{equation*}
			\tilde{u}(t) \leq M_0+\int_0^t\left( M \|a_2\|_{L^{\infty}(\Omega)} e^{ \alpha \tau}+M L(C_\rho) \right) \tilde{u}(s)d s.
		\end{equation*}
		By Gronwall's inequality, it follows that
		\begin{equation*}
			\tilde{u}(t) \leq M_0e^{( M \|a_2\|_{L^{\infty}(\Omega)} e^{ \alpha \tau}+M L(C_\rho) )t}.
		\end{equation*}
		Using the definition of $\tilde{u}$ and assumption \ref{assumptionona2}, we obtain \eqref{ineqexpstability}.
		
		\item \textbf{Case $0\le t< \tau$} : From \eqref{ineqDuhamel}, it follows that
		\begin{equation*}
			\begin{aligned}
				& \|U(t)\|_{\mathcal{H}} \leq M e^{- \alpha t}\left\|U_0\right\|_{\mathcal{H}}+M e^{- \alpha t} \int_0^\tau e^{ \alpha s}\|\Psi(s-\tau)\|_{\mathcal{H}} d s \\
				& \quad+M L(C_\rho) e^{- \alpha t} \int_0^t e^{ \alpha s}\|U(s)\|_{\mathcal{H}} d s.
			\end{aligned}
		\end{equation*}
		Thus,
		\begin{equation*}
			\tilde{u}(t) \leq M_0+\int_0^t M L(C_\rho) \tilde{u}(s)d s.
		\end{equation*}
		By Gronwall's inequality, we obtain
		\begin{equation*}
			\tilde{u}(t) \leq M_0e^{M L(C_\rho) t}.
		\end{equation*}
		As before, using assumption \ref{assumptionona2}, we obtain \eqref{ineqexpstability}.
	\end{itemize}
\end{proof}
We are now ready to prove the global existence and uniqueness for  solutions of   system \eqref{coupledstabilityproblem}.
\begin{theorem}
 Assume that either Assumption \ref{assumptionsgerbi} or Assumption \ref{assumptionsalabau} holds, together with Hypotheses \ref{nonlinearassumption} and \ref{assumptionona2}.  We consider the  initial data $U_0\in\mathcal{H}$  and $\Psi\in C([-\tau,0];\mathcal{H})$ such that 
 \begin{equation}
 	\label{smallnessinitialdata}
 	\left\|U_0\right\|_{\mathcal{H}}^2+\int_0^\tau \|\sqrt{|a_2|}g(s-\tau)\|_{L^2(\Omega)}^2 d s <\rho^2,
 \end{equation}
 with $\rho$ sufficiently small constant.
 Then, the problem \eqref{cauchyproblem} has a unique global solution exponentially decaying according to  \eqref{ineqexpstability}.
\end{theorem}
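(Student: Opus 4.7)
The plan is to upgrade the local-in-time mild solution from Lemma \ref{lemmalocalexistence} to a global one via a bootstrap/continuation argument based on the a priori exponential decay of Theorem \ref{Theoremexpstability}. Lemma \ref{lemmaminenergy} guarantees that the natural energy functional is coercive in $\|U\|_{\mathcal{H}}$ on the lifespan, which will allow us to maintain the $\mathcal{H}$-bound required to invoke Theorem \ref{Theoremexpstability}.

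First I would fix $C_\rho > 0$ small enough so that, thanks to Hypothesis \ref{assumptionona2} together with the property $L(r) \to 0$ as $r \to 0$,
\[
L(C_\rho) < \frac{\alpha - M\|a_2\|_{L^\infty(\Omega)} e^{\alpha\tau}}{M}.
\]
Then I would choose $\rho > 0$ in \eqref{smallnessinitialdata} so small that, using the Poincaré inequality together with the pointwise bound $\|\Psi(s-\tau)\|_{\mathcal{H}} = \|a_2 g(s-\tau)\|_{L^2(\Omega)} \le \|a_2\|_{L^\infty(\Omega)}^{1/2}\,\|\sqrt{|a_2|}\,g(s-\tau)\|_{L^2(\Omega)}$, (i) the smallness hypotheses on $\|\nabla u_0\|_{L^2}$, $\|\nabla y_0\|_{L^2}$ and $E(0)$ required by Lemma \ref{lemmaminenergy} are satisfied; and (ii) the quantity
\[
M\!\left(\|U_0\|_{\mathcal{H}} + \int_0^\tau e^{\alpha s}\|\Psi(s-\tau)\|_{\mathcal{H}}\,ds\right)
\]
is strictly less than $C_\rho$.

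Let $[0, T_{\max})$ denote the maximal existence interval of the mild solution supplied by Lemma \ref{lemmalocalexistence}, and define
\[
T^\ast := \sup\bigl\{\, t \in [0, T_{\max}) : \|U(s)\|_{\mathcal{H}} \le C_\rho \text{ for all } s \in [0,t]\,\bigr\}.
\]
By continuity and the smallness of $\|U_0\|_{\mathcal{H}} < \rho$, one has $T^\ast > 0$. On $[0, T^\ast)$ the hypotheses of Theorem \ref{Theoremexpstability} are in force, which yields the decay estimate \eqref{ineqexpstability}; by step (ii) above, its right-hand side is strictly less than $C_\rho$ for every $t \in [0, T^\ast)$. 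If $T^\ast < T_{\max}$, continuity would give $\|U(T^\ast)\|_{\mathcal{H}} < C_\rho$, contradicting the maximality of $T^\ast$. Hence $T^\ast = T_{\max}$, and since $\|U(t)\|_{\mathcal{H}}$ stays bounded on $[0, T_{\max})$, Lemma \ref{lemmalocalexistence} can be iterated starting from a time arbitrarily close to $T_{\max}$ to extend the solution past it, forcing $T_{\max} = +\infty$. The decay estimate \eqref{ineqexpstability} then holds on $[0,\infty)$, and uniqueness is inherited from the local statement.

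The delicate point is the consistency of the parameter calibration: the threshold $C_\rho$ must be small enough that $L(C_\rho)$ fits within the margin left by Hypothesis \ref{assumptionona2}, while $\rho$ must be small enough both to activate the coercivity $E(t) > \tfrac{1}{4}\|U(t)\|_{\mathcal{H}}^2$ of Lemma \ref{lemmaminenergy} and to close the bootstrap $\|U\|_{\mathcal{H}} \le C_\rho$ via \eqref{ineqexpstability}. This is possible only because $L$ and $h_1, h_2$ all vanish at the origin, so that $\rho$ can be shrunk independently until all these smallness constraints are simultaneously satisfied.
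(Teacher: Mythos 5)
Your bootstrap is correct, but it is a genuinely different route from the paper's. The paper never runs a continuity argument on $\|U(t)\|_{\mathcal{H}}$: it fixes a horizon $T>\tau$ with a contraction factor $C_T<1$, uses the energy machinery (Lemmas \ref{lemmaminenergy1} and \ref{lemmaminenergy} together with Proposition \ref{Propmajenergy}) to obtain the a priori bound $\|U(t)\|_{\mathcal{H}}\le C_\rho:=2\sqrt{C(T)}\rho$ on $[0,T]$, only then invokes Theorem \ref{Theoremexpstability}, and finally checks that the full smallness condition \eqref{smallnessinitialdata} --- including the delay-history term $\int_{T-\tau}^{T}\|\sqrt{|a_2|}\,y_t(s)\|_{L^2(\Omega)}^2\,ds$ --- is reproduced at $t=T$ with the factor $C_T<1$, so that the whole argument can be restarted on $[T,2T]$, $[2T,3T]$, etc. Your approach short-circuits all of this: by choosing $\rho$ so that $M\bigl(\|U_0\|_{\mathcal{H}}+\int_0^\tau e^{\alpha s}\|\Psi(s-\tau)\|_{\mathcal{H}}\,ds\bigr)<C_\rho$, the conclusion of Theorem \ref{Theoremexpstability} itself closes the bootstrap, which gives the decay estimate with a single set of constants on all of $[0,\infty)$ instead of having to patch estimates interval by interval. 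Two remarks. First, your invocation of Lemma \ref{lemmaminenergy} is both unnecessary and, as literally phrased (``on the lifespan''), unavailable: its hypothesis requires $h_i\bigl(2\sqrt{C(T)E(0)}\bigr)<\tfrac12$ with $C(T)=e^{4\|a_2\|_{L^\infty(\Omega)}T}$, which cannot hold uniformly as $T\to\infty$ for any fixed $\rho$ --- this is precisely the obstruction the paper's fixed-length iteration with $C_T<1$ is designed to circumvent. You should simply delete that ingredient; your norm bootstrap does not use it. Second, the step ``bounded norm implies $T_{\max}=\infty$'' relies on a uniform lower bound for the local existence time on bounded sets (the standard blow-up alternative for the locally Lipschitz perturbation, applied on each window $[k\tau,(k+1)\tau]$ where the delay term is a known forcing); you assert it rather than prove it, but the paper is no more explicit at the corresponding point, so this is not a gap specific to your argument.
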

\begin{proof}
	Let us fix a time $T>\tau$  such that
	$$C_T:=2M^2(1+\tau e^{2 \alpha\tau}\Vert a_2\Vert_\infty)(1+\tau\|a_2\|_{L^{\infty}(\Omega)}e^{ \alpha\tau})e^{-\left( \alpha-M\|a_2\|_{L^{\infty}(\Omega)}e^{ \alpha\tau}\right) T}< 1.$$	
	Moreover let $\rho>0$ be such that
	$$\rho\leq \frac{1}{2\sqrt{C(T)}}\min_{i=1,2}\left\{h_i^{-1}\left(\frac{1}{2}\right)\right\}.$$
	We consider initial data such that
$$\|u_1\|_{L^2(\Omega)}^2+\|\nabla u_0\|_{L^2(\Omega)}^2+\|y_1\|_{L^2(\Omega)}^2+\|\nabla y_0\|_{L^2(\Omega)}^2
+\int_{-\tau}^{0}\|\sqrt{|a_2|}g(s)\|_{L^2(\Omega)}^2\,ds\,<\rho^2.$$
We observe that this is equivalent to 
\begin{equation}
	\label{assymptiononinitialdata}
\|U_0\|_{\mathcal{H}}^2
+\int_{0}^{\tau}\|\sqrt{|a_2|}g(s-\tau)\|_{L^2(\Omega)}^2\,ds\, < \rho^2.
\end{equation}
Now, by  Lemma \ref{lemmalocalexistence} we know that there exists a local solution $(u,y)$  of  \eqref{coupledstabilityproblem} on a time interval $[0,\delta)$. Without  loss of generality, 
we can assume $\delta<T$ (eventually, we can take a larger $T$). From our assumption on the initial data, on the monotonicity of $h_i$ for $i=1,2$ and
the fact that $C(T)>1$, we have 
$$h_1(\|\nabla u_0\|_{L^2(\Omega)})\leq h_1(\rho)\leq h_1\left(\frac{1}{2\sqrt{C(T)}}h_1^{-1}\left(\frac{1}{2}\right)\right)<\frac{1}{2}$$
and 
$$h_2(\|\nabla y_0\|_{L^2(\Omega)})\leq h_2(\rho)\leq h_2\left(\frac{1}{2\sqrt{C(T)}}h_2^{-1}\left(\frac{1}{2}\right)\right)<\frac{1}{2}.$$
Hence, by Lemma \ref{lemmaminenergy1}, $E(0)>0$. Moreover, from \eqref{estimatenonlinearterm}, we obtain
\begin{equation}
	\label{ineqhelp0}
\begin{aligned}
E(0)&\leq \frac{1}{2}\|u_1\|_{L^2(\Omega)}^2+\frac{3}{4}\|\nabla u_0\|_{L^2(\Omega)}^2+\frac{1}{2}\|y_1\|_{L^2(\Omega)}^2\\
{}&\hspace{0.5 cm}+\frac{3}{4}\|\nabla y_0\|_{L^2(\Omega)}^2+\frac{1}{2}\int_{-\tau}^{0}\|\sqrt{|a_2|}g(s)\|_{L^2(\Omega)}^2\,ds\,dx\leq \rho^2,
\end{aligned}
\end{equation}
which gives us
$$h_i(2\sqrt{C(T)E(0)})< h_i(2\sqrt{C(T)}\rho)<h_i\left(h_i^{-1}\left(\frac{1}{2}\right)\right)=\frac{1}{2}$$
for $i=1,2$. Hence, we can apply Lemma \ref{lemmaminenergy} to obtain
\begin{equation}
	\label{ineqhelp1}
\begin{aligned}
	E(t)&>\frac{1}{4}\|u_t(t)\|^2_{L^2(\Omega)}+\frac{1}{4}\|\nabla u(t)\|^2_{L^2(\Omega)}+\frac{1}{4}\|y_t(t)\|^2_{L^2(\Omega)}+\frac{1}{4}\|\nabla y(t)\|^2_{L^2(\Omega)}\\
	&+\frac{1}{4}\int_{t-\tau}^{t}\|\sqrt{|a_2|}y_t(s)\|^2_{L^2(\Omega)}\,ds>0
\end{aligned}
\end{equation}
for all $t\in[0,\delta)$; in particular $E(t)\geq \frac{1}{4}\|y_t(t)\|^2_{L^2(\Omega)}$ for all $t\in[0,\delta)$. Thus, applying Proposition \ref{Propmajenergy} and the fact that $\delta<T$ we have 
\begin{equation}
	\label{ineqhelp2}
	E(t)\leq C(t)E(0),\quad \forall t\in[0,\delta).
\end{equation}
As a consequence, from \eqref{ineqhelp1} and \eqref{ineqhelp2}, we obtain
$$\begin{aligned}
\frac{1}{4}\|\nabla u(t)\|^2_{L^2(\Omega)}&\leq \frac{1}{4}\|u_t(t)\|^2_{L^2(\Omega)}+\frac{1}{4}\|\nabla u(t)\|^2_{L^2(\Omega)}+\frac{1}{4}\|y_t(t)\|^2_{L^2(\Omega)}+\frac{1}{4}\|\nabla y(t)\|^2_{L^2(\Omega)}\\
&+\frac{1}{4}\int_{t-\tau}^{t}\|\sqrt{|a_2|}y_t(s)\|^2_{L^2(\Omega)}\,ds<E(t)\leq C(T)E(0),
\end{aligned}$$
and 
$$\begin{aligned}
	\frac{1}{4}\|\nabla y(t)\|^2_{L^2(\Omega)}&\leq \frac{1}{4}\|u_t(t)\|^2_{L^2(\Omega)}+\frac{1}{4}\|\nabla u(t)\|^2_{L^2(\Omega)}+\frac{1}{4}\|y_t(t)\|^2_{L^2(\Omega)}+\frac{1}{4}\|\nabla y(t)\|^2_{L^2(\Omega)}\\
	&+\frac{1}{4}\int_{t-\tau}^{t}\|\sqrt{|a_2|}y_t(s)\|^2_{L^2(\Omega)}\,ds<E(t)\leq C(T)E(0),
\end{aligned}$$
for all $t\in[0,\delta)$. Then, we can extend the solution in $t=\delta$ and on the whole interval $[0,T]$. In particular, for $t=T$, we get
$$h_1(\|\nabla u(T)\|_{L^2(\Omega)})\le h_1(2\sqrt{C(T)E(0)})< \frac{1}{2}$$
and 
$$h_2(\|\nabla y(T)\|_{L^2(\Omega)})\le h_2(2\sqrt{C(T)E(0)})< \frac{1}{2}.$$
By \eqref{ineqhelp1} and \eqref{ineqhelp0} we deduce 
$$\frac{1}{4}\|U(t)\|_{\mathcal{H}}^2\leq E(t)\leq C(T)E(0)\leq C(T)\rho^2, \quad\forall t\in[0,T],$$
i.e 
$$\|U(t)\|_{\mathcal{H}}\leq C_\rho, \quad\forall t\in[0,T],$$
where $C_\rho:=2 \sqrt{C(T)}\rho$.
Now, eventually choosing smaller values of $\rho$, we suppose that $\rho$
is such that $L\left(C_\rho\right)<\frac{\alpha-M\|a_2\|_{L^{\infty}(\Omega)}e^{ \alpha\tau}}{2M}$.  
Hence, Theorem \ref{Theoremexpstability}  gives us the following estimate:
	\begin{equation}
		\label{majuint}
		\|U(t)\|_{\mathcal{H}} \leq M\left(\left\|U_0\right\|_{\mathcal{H}}+\int_0^\tau e^{ \alpha s}\|\Psi(s-\tau)\|_{\mathcal{H}} d s\right) e^{-\frac{\left( \alpha-M\|a_2\|_{L^{\infty}(\Omega)}e^{ \alpha\tau}\right)}{2} t},
	\end{equation}
	 for any $t\in[0,T]$. By Hypothesis \eqref{assymptiononinitialdata}  and the Hölder inequality, it follows
	 $$\int_0^\tau e^{ \alpha s}\|\Psi(s-\tau)\|_{\mathcal{H}} d s=\int_{0}^{\tau}e^{ \alpha s}\|a_2g(s-\tau)\|_{L^2(\Omega)}\,ds\leq e^{ \alpha \tau}\sqrt{\tau}\rho \Vert a_2\Vert_\infty^{1/2}.$$
	 Hence, from \eqref{majuint}, we obtain
	 $$	\|U(t)\|_{\mathcal{H}}^2 \leq 2M^2\rho^2(1+\tau e^{2 \alpha\tau}  \Vert a_2\Vert_\infty)e^{-\left( \alpha-M\|a_2\|_{L^{\infty}(\Omega)}e^{ \alpha\tau}\right) t}.$$
	 for any $t\in[0,T]$. Moreover 
	 $$\int_{T-\tau}^{T} \|\sqrt{|a_2|}y_t(s)\|_{L^2(\Omega)}^2\, ds\leq 2M^2\tau\rho^2\|a_2\|_{L^{\infty}(\Omega)}e^{ \alpha\tau}(1+\tau e^{2 \alpha\tau} \Vert a_2\Vert_\infty)e^{-\left( \alpha-M\|a_2\|_{L^{\infty}(\Omega)}e^{ \alpha\tau}\right) T}.$$
	 Then, 
	 $$\|U(T)\|_{\mathcal{H}}^2+\int_{T-\tau}^{T} \|\sqrt{|a_2|}y_t(s)\|_{L^2(\Omega)}^2\, ds\leq C_T\rho^2 < \rho^2,$$
	where we have used the fact that 
	$C_T<1$.
	 
	 We proceed by applying a similar argument as before on the interval $[T,2T]$, thereby extending the solution to $[0,2T]$. By iterating this process inductively, we construct a unique global solution to \eqref{cauchyproblem}. Moreover, at each step, the exponential decay estimate \eqref{ineqexpstability} is preserved, ensuring that the global solution satisfies this decay property.

\end{proof}
\section{The  coupled system in the presence of   indefinite damping}\label{section3}
In this section, we study the problem \eqref{indefiniteproblem}. Referring to Section \ref{section2}, one can reproduce the same procedure in the case when $\tau = 0$, and deduce that the problem \eqref{indefiniteproblem} is exponentially stable under the following condition on the coefficient $a_2$:
$$
\|a_2\|_{L^{\infty}(\Omega)} < \frac{ \alpha}{M}.
$$
The main objective of this section is to establish the exponential stability of the problem \eqref{indefiniteproblem} under a weaker condition, namely,  by restricting the assumption on the negative part of $a_2$, i.e. $a_2^{-}:=-\min \{a_2, 0\}$.

Before addressing problem \eqref{indefiniteproblem}, we first need to investigate the exponential stability of the associated coupled wave equation with a definite (i.e., nonnegative) damping:

\begin{equation}
	\label{definetedamping}
	\begin{cases}
		u_{tt}(x,t) - \Delta u(x,t) + b(x) y_{t}(x,t) + d_1(x) u_t(x,t) =0, & \text{in } \Omega \times (0, \infty), \\ 
		y_{tt}(x,t) - \Delta y(x,t) - b(x) u_{t}(x,t) + d_2(x) y_t(x,t) = 0, & \text{in } \Omega \times (0, \infty), \\ 
		u(x,t) = y(x,t) = 0, & \text{on } \partial\Omega \times (0, \infty), \\
		u(x,0) = u_{0}(x), \quad u_t(x,0) = u_1(x), & \text{in } \Omega, \\ 
		y(x,0) = y_{0}(x), \quad y_t(x,0) = y_1(x), & \text{in } \Omega, 
	\end{cases}
\end{equation}
where $d_1\in C^0(\bar{\Omega},\mathbb{R})$ and $d_2\in L^{\infty}(\Omega)$ are given functions. The coefficient $d_1$ satisfies the conditions 
$$
d_1(x) \geq 0 \quad \text{a.e in } \Omega,
$$
and
$$
d_1(x) > d_0 > 0 \quad \text{a.e in a subdomain } \omega \subset \Omega.
$$	
While the coefficient $d_2$ satisfies the condition
$$d_2(x)\geq 0\quad \text{a.e in } \Omega.$$
Next, we define the linear unbounded operator $ \mathcal{\hat{A}}: D( \mathcal{\hat{A}})\subset \mathcal{H} \rightarrow \mathcal{H}$ by
\begin{equation}\label{theoperator}
	\begin{cases}
		D(\mathcal{\hat{A}})&=\left( (H^2(\Omega) \cap H_0^1(\Omega)) \times H_0^1(\Omega) \right)^2,\\
		 \mathcal{\hat{A}} U&=(u_{2},\Delta u_1 - b y_2 - d_1 u_2, y_{2}, \Delta y_1 + b u_2-d_2y_2), \quad \forall U=(u_{1}, u_{2}, y_{1}, y_{2}) \in D( \mathcal{\hat{A}}).
	\end{cases}
\end{equation}
Using this operator, we rewrite the system \eqref{definetedamping} as an abstract Cauchy problem. Indeed, setting $U(t)=(u(t),u_t(t),y(t),y_t(t))$, one has that \eqref{definetedamping} can be formulated as 
\begin{equation}\label{cauchyproblemdefinite}
	\begin{cases}
		U^\prime(t)&=  \mathcal{\hat{A}} U(t),\quad t>0,\\
		U(0)&=U_0,
	\end{cases}
\end{equation}
where $U(0)=(u_0,u_1,y_0,y_1)$.

The existence and uniqueness result reads as follows.
\begin{theorem}\label{wellposedacta}
	For any $U_{0}=\left(u_{0}, u_{1}, y_{0}, y_{1}\right) \in \mathcal{H}$, there exists a unique solution $U=(u,u_t,y,y_t)\in C^{0}([0,\infty); \mathcal{H})$ of system \eqref{cauchyproblemdefinite}. Moreover, if $U_{0}=\left(u_{0}, u_{1}, y_{0}, y_{1}\right) \in D( \mathcal{\hat{A}})$, then
	$$
	U \in C^{0}([0,\infty); D( \mathcal{\hat{A}})) \cap C^{1}([0,\infty); \mathcal{H}).
	$$
\end{theorem}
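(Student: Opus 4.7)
The plan is to apply the Lumer--Phillips theorem to the operator $\hat{\mathcal{A}}$ on the Hilbert space $\mathcal{H}$, since this is the standard route for showing that a dissipative operator generates a $C_0$-semigroup of contractions, from which both the existence/uniqueness statement and the regularity statement follow immediately.

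First I would verify dissipativity by direct computation. For $U=(u_1,u_2,y_1,y_2)\in D(\hat{\mathcal{A}})$, one has
$$
(\hat{\mathcal{A}} U, U)_{\mathcal{H}} = \int_\Omega \nabla u_2\cdot\nabla u_1\,dx + \int_\Omega(\Delta u_1 - by_2 - d_1 u_2)u_2\,dx + \int_\Omega \nabla y_2\cdot\nabla y_1\,dx + \int_\Omega(\Delta y_1 + bu_2 - d_2 y_2)y_2\,dx.
$$
After integrating the Laplacian terms by parts (using $u_2,y_2\in H^1_0(\Omega)$), the gradient terms cancel pairwise, the cross terms $-\int by_2 u_2\,dx+\int bu_2 y_2\,dx$ cancel, and what remains is
$$
(\hat{\mathcal{A}} U, U)_{\mathcal{H}} = -\int_\Omega d_1 u_2^2\,dx - \int_\Omega d_2 y_2^2\,dx \le 0,
$$
since $d_1,d_2\geq 0$ a.e. in $\Omega$. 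Hence $\hat{\mathcal{A}}$ is dissipative.

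Next I would show that $I-\hat{\mathcal{A}}$ is surjective. Given $F=(f_1,f_2,f_3,f_4)\in\mathcal{H}$, the equation $(I-\hat{\mathcal{A}})U=F$ is equivalent, after eliminating $u_2=u_1-f_1$ and $y_2=y_1-f_3$, to the elliptic system
$$
\begin{cases}
u_1 - \Delta u_1 + b(y_1-f_3) + d_1(u_1-f_1) = f_1+f_2, \\
y_1 - \Delta y_1 - b(u_1-f_1) + d_2(y_1-f_3) = f_3+f_4,
\end{cases}
$$
with Dirichlet boundary conditions. I would apply the Lax--Milgram lemma on $V:=H^1_0(\Omega)\times H^1_0(\Omega)$ to the bilinear form
$$
a((u_1,y_1),(\varphi,\psi)) = \int_\Omega(u_1\varphi+\nabla u_1\cdot\nabla\varphi+y_1\psi+\nabla y_1\cdot\nabla\psi + d_1 u_1\varphi+d_2 y_1\psi + by_1\varphi - bu_1\psi)\,dx.
$$
Continuity is immediate from $b,d_1,d_2\in L^\infty(\Omega)$, and coercivity follows from the antisymmetry of the coupling: $\int b y_1\varphi - \int b u_1\psi$ evaluated at $(\varphi,\psi)=(u_1,y_1)$ gives zero, so $a((u_1,y_1),(u_1,y_1))\geq \|u_1\|_{H^1_0}^2+\|y_1\|_{H^1_0}^2$. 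This yields $(u_1,y_1)\in V$, and elliptic regularity then gives $u_1,y_1\in H^2(\Omega)\cap H^1_0(\Omega)$, so $U\in D(\hat{\mathcal{A}})$.

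By the Lumer--Phillips theorem, $\hat{\mathcal{A}}$ generates a $C_0$-semigroup of contractions $(\hat{S}(t))_{t\geq 0}$ on $\mathcal{H}$. The first assertion of the theorem follows by setting $U(t)=\hat{S}(t)U_0$, and the regularity statement for $U_0\in D(\hat{\mathcal{A}})$ is the standard consequence of semigroup theory for classical solutions. The main (and only) delicate point is the coercivity check in the Lax--Milgram step, where one must exploit the skew-symmetric structure of the velocity coupling to absorb the cross term $\int b(y_1\varphi-u_1\psi)\,dx$; everything else is routine.
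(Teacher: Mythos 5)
Your proposal is correct and follows essentially the same route as the paper: dissipativity by integration by parts (with the skew-symmetric coupling and gradient terms cancelling), surjectivity of $I-\hat{\mathcal{A}}$ via Lax--Milgram on $H_0^1(\Omega)\times H_0^1(\Omega)$ with coercivity from the vanishing of the antisymmetric cross term on the diagonal, followed by elliptic regularity and Lumer--Phillips. The only cosmetic difference is how the reduced elliptic system is written (you keep $b(y_1-f_3)$ and $d_1(u_1-f_1)$ on the left, the paper moves them to the right-hand side), which changes nothing.
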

\begin{proof}
	We will prove that the operator $ \mathcal{\hat{A}}$ defined in \eqref{theoperator} generates a contraction semi-group on the Hilbert
	space $\mathcal{H}$. To this end, we show that the unbounded operator $ \mathcal{\hat{A}}$ is maximal dissipative on $\mathcal{H}$.
	According to \cite[Proposition 2.2.6]{Cazenave1998},
	it is sufficient to prove that $ \mathcal{\hat{A}}$ is dissipative and that $I-  \mathcal{\hat{A}}$ is surjective, where
	$$
	I= \operatorname{diag}(Id,Id,Id,Id).
	$$
	
	\underline{$ \mathcal{\hat{A}}$ is dissipative.} Let $U=(u_1,u_2,y_1,y_2)\in D( \mathcal{\hat{A}})$. Then, by integrating by parts and  due to the fact that $d_1(x)\geq 0$ and $d_2(x)\geq 0$, we obtain
	\begin{align*}
		\langle  \mathcal{\hat{A}}U,U\rangle_{\mathcal{H}} =& \langle \big(u_{2},\Delta u_{1}-b y_{2}-d_1u_2, y_{2},  \Delta y_1 + b u_2-d_2y_2\big),\big(u_1,u_2,y_1,y_2\big) \rangle_{\mathcal{H}}\\
		=&\int_{\Omega}\Big(\nabla u_1\nabla u_2+(\Delta u_{1}-b y_{2}-d_1u_2)u_2+\nabla y_1\nabla y_2+( \Delta y_1 + b u_2-d_2y_2)y_2\Big) dx\\
		=&-\int_{\Omega}d_1(x) u_2^2(x)\,dx-\int_{\Omega}d_2(x)y_2^2(x)\,dx\\
		\leq &0.	
	\end{align*}
	Therefore the operator $ \mathcal{\hat{A}}$ is dissipative.
	
	\underline{$I- \mathcal{\hat{A}}$ is surjective.} Given $F=\left(f_{1}, f_{2}, f_{3}, f_{4}\right) \in \mathcal{H}$,
	we seek  $U=\left(u_{1}, u_{2}, y_{1}, y_{2}\right) \in D( \mathcal{\hat{A}})$ such that
	\begin{equation}\label{probdisspdege}
		U- \mathcal{\hat{A}} U=F \Leftrightarrow
		\begin{cases}
			u_{1}-u_{2} &=f_{1}, \\
			u_{2}-\Delta u_1+b y_{2}+d_1u_2&=f_{2}, \\
			y_{1}-y_{2} &=f_{3}, \\
			y_{2}-\Delta y_1-b u_{2}+d_2y_2 &=f_{4}.
		\end{cases}
	\end{equation}
	We suppose that we have found $u_1$ and $y_1$ with the right regularity. Then, we set
	\begin{equation}
		\label{u2byu1}
		u_2= u_1-f_1 \quad \text{and }\quad y_2= y_1-f_3.
	\end{equation}
	Eliminating $u_{2}$ and $y_{2}$ in \eqref{probdisspdege}, we get the following system
	\begin{equation}
		\label{prbfidege}
		\begin{cases}
			u_{1}-\Delta u_1+ b y_{1}+ d_1 u_1&= (1+d_1)f_{1}+f_{2}+b f_{3}, \\
			y_{1}-\Delta y_1- b u_{1}+d_2y_1&=(1+d_2)f_{3}-b f_{1}+f_{4}.
		\end{cases}
	\end{equation}
	Now, we consider the bilinear form $\Gamma:\left(H_{0}^{1}(\Omega)\times H_{0}^{1}(\Omega)\right)^{2} \rightarrow \mathbb{R}$ given by
	$$
	\begin{aligned}
		\Gamma\left(\left(u_{1}, y_{1}\right),\left(\phi_{1}, \phi_{2}\right)\right)=&\int_{\Omega}(u_{1} \phi_{1}+\nabla u_{1} \nabla\phi_{1} +y_{1} \phi_{2}+\nabla y_{1} \nabla\phi_{2})d x+\int_{\Omega}d_1u_1\phi_1\,dx\\
		&+\int_{\Omega}d_2y_1\phi_2\,dx+ \int_{\Omega}b \left(y_{1} \phi_{1}-u_{1} \phi_{2}\right)\,dx
	\end{aligned}
	$$
	and the linear form $L:H_{0}^{1}(\Omega)\times H_{0}^{1}(\Omega)\rightarrow \mathbb{R}$ given by
	$$
	L\left(\phi_{1}, \phi_{2}\right)=\int_{\Omega}\left((1+d_1)f_{1}+f_{2}+b f_{3}\right) \phi_{1}\,dx + \int_{\Omega}\left((1+d_2)f_{3}-b f_{1}+f_{4}\right) \phi_{2}\,dx.
	$$
	In view of Poincaré inequality, $\Gamma$ is a continuous bilinear form on $H_{0}^{1}(\Omega)\times H_{0}^{1}(\Omega)$ and $L$ is a continuous linear functional on $H_{0}^{1}(\Omega)\times H_{0}^{1}(\Omega)$. Moreover, it is easy to check that $\Gamma$ is also coercive on $H_{0}^{1}(\Omega)\times H_{0}^{1}(\Omega)$. Indeed, let $(u_1,y_1)\in H_{0}^{1}(\Omega)\times H_{0}^{1}(\Omega)$  
	$$\begin{aligned}
		\Gamma((u_1,y_1),(u_1,y_1))&=\int_{\Omega}\left(u_1^2+|\nabla u_1|^2+y_1^2+|\nabla y_1|^2\right)\,dx+ \int_{\Omega}d_1u_1^2\,dx+ \int_{\Omega}d_2y_1^2\,dx\\
		&\geq \|(u_1,y_1)\|_{H^{1}_{0}(\Omega)\times H^{1}_{0}(\Omega)}^2.
	\end{aligned}$$
	By applying the Lax–Milgram theorem, we deduce that there exists a unique solution $\left(u_{1}, y_{1}\right) \in H_{0}^{1}(\Omega)\times H_{0}^{1}(\Omega)$ of
	\begin{equation}\label{pv}
		\Gamma\left(\left(u_{1}, y_{1}\right),\left(\phi_{1}, \phi_{2}\right)\right)= L\left(\phi_{1}, \phi_{2}\right)\, \text{for all } (\phi_1,\phi_2)\in H_{0}^{1}(\Omega)\times H_{0}^{1}(\Omega).
	\end{equation}
	If $(\phi_1,\phi_{2})\in \mathcal{D}(\Omega)\times \mathcal{D}(\Omega)$, then $(u_1,y_1)$ solves \eqref{prbfidege} in $\mathcal{D}^{\prime}(\Omega)\times \mathcal{D}^{\prime}(\Omega)$ and so $(u_1,y_1)\in \left(H^{2}(\Omega)\right)^{2}$ (See \cite[Chapter 5]{Allaire}).
	Now, take $(u_2,y_2):=(u_1-f_1, y_1-f_3)$; then $(u_2,y_2)\in \left(H_{0}^{1}(\Omega)\right)^{2}$.
	Thus, we have found $U=\left(u_{1}, u_{2}, y_{1}, y_{2}\right) \in D( \mathcal{\hat{A}})$, which satisfies \eqref{probdisspdege}. Consequently $ I- \mathcal{\hat{A}}$ is surjective. Conclusively, the
	Lumer–Phillips Theorem implies the operator
	$ \mathcal{\hat{A}}$ generates a strongly continuous semigroup of contraction
	in $\mathcal{H}$. Consequently, the well-posedness result follows from the Hille-Yosida theorem (see \cite[Theorem 4.5.1]{barbu} or  \cite[Theorem A.11]{Coron2007}).
\end{proof}
	Now, we study the exponential stability of system \eqref{definetedamping}.
To this aim we first define its corresponding energy by
\begin{equation}\label{energysystem}
	\begin{aligned}
		\mathcal{E}(t)&:=\frac{1}{2} \int_{\Omega}\left\{u_{t}^{2}(x,t)+|\nabla u(x,t)|^2+y_{t}^{2}(x,t)+|\nabla y(x,t)|^2\right\}\,dx,\quad \forall t \geq 0.
	\end{aligned}
\end{equation}

Direct computations  allow to show that  the energy   exhibits a decay with respect to time. More precisely, we have the following result.
The energy $\mathcal{E}$ associated to the solution of system \eqref{definetedamping}  satisfies
	\begin{equation}\label{decergi}
		\begin{aligned}
			\mathcal{E}^{\prime}(t)&= -\int_{\Omega}d_1(x)u_t^2(x,t)\,dx-\int_{\Omega}d_2(x)y_t^2(x,t)\,dx,\quad \forall t \geq 0.
		\end{aligned}
	\end{equation}	
	Consequently,  the energy $\mathcal{E}(.)$ is nonincreasing.


 We can prove the following exponential stability result for the  problem \eqref{definetedamping}.
\begin{theorem}
	\label{Theoremexpstabilityd}
Assume that either Assumption \ref{assumptionsgerbi} or Assumption \ref{assumptionsalabau} holds. There exist two positive constants $\hat{M}, \hat{\alpha}$ such that
	\begin{equation}
		\label{expstability}
		\mathcal{E}(t) \leq \hat{M} e^{-\hat{\alpha} t} \mathcal{E}(0), \quad t>0,
	\end{equation}
	for any solution to the problem \eqref{definetedamping}.
\end{theorem}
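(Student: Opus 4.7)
The plan is to apply the Huang--Pr\"uss frequency domain characterization of exponential stability for bounded contraction semigroups. Since Theorem \ref{wellposedacta} shows that $\hat{\mathcal{A}}$ generates a contraction semigroup on $\mathcal{H}$, it suffices to verify
\begin{equation*}
i\mathbb{R}\subset \rho(\hat{\mathcal{A}}), \qquad \sup_{\beta\in\mathbb{R}} \|(i\beta I-\hat{\mathcal{A}})^{-1}\|_{\mathcal{L}(\mathcal{H})}<+\infty.
\end{equation*}
The key structural input I intend to exploit is that, for the subsystem without damping on $y$ (i.e.\ $d_2\equiv 0$), both conditions already hold under either Assumption \ref{assumptionsgerbi} or Assumption \ref{assumptionsalabau}, as established in \cite{Gerbi2021} (frequency domain, under GCC) and \cite{Alabau2017} (multipliers, under PMGC). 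My strategy is therefore to mirror those proofs, tracking the impact of the extra nonnegative term $d_2 y_t$ and showing that it preserves every estimate.

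For the inclusion $i\mathbb{R}\subset \rho(\hat{\mathcal{A}})$, I would show that any $U=(u_1,u_2,y_1,y_2)\in D(\hat{\mathcal{A}})$ with $\hat{\mathcal{A}}U=i\beta U$ must be zero. Taking the real part of $\langle\hat{\mathcal{A}}U,U\rangle_{\mathcal{H}}$ and using the dissipativity computation from the proof of Theorem \ref{wellposedacta} yields
\begin{equation*}
\int_\Omega d_1(x)|u_2|^2\,dx+\int_\Omega d_2(x)|y_2|^2\,dx=0,
\end{equation*}
so $u_2\equiv 0$ on $\omega$; combined with $u_2=i\beta u_1$, this forces $u_1\equiv 0$ on $\omega$ for $\beta\neq 0$, and the case $\beta=0$ reduces to the invertibility of $\hat{\mathcal{A}}$ already implicit in the surjectivity argument of Theorem \ref{wellposedacta}. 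A Holmgren-type unique continuation argument, applied exactly as in \cite{Gerbi2021,Alabau2017} under the geometric assumption in force, then extends this vanishing to all of $\Omega$ and closes the argument.

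For the uniform resolvent bound I would argue by contradiction: assume sequences $(\beta_n)\subset\mathbb{R}$ and $(U_n)\subset D(\hat{\mathcal{A}})$ with $\|U_n\|_{\mathcal{H}}=1$ and $F_n:=(i\beta_n I-\hat{\mathcal{A}})U_n\to 0$ in $\mathcal{H}$. Taking the real part of $\langle F_n,U_n\rangle_{\mathcal{H}}$ makes the skew-symmetric coupling drop out and leaves
\begin{equation*}
\int_\Omega d_1(x)|u_{2,n}|^2\,dx+\int_\Omega d_2(x)|y_{2,n}|^2\,dx\longrightarrow 0.
\end{equation*}
In particular $u_{2,n}\to 0$ in $L^2(\omega)$, which is precisely the starting point of the frequency domain / multiplier estimates in \cite{Gerbi2021,Alabau2017}; I would reproduce those estimates to push this local smallness to the global conclusion $\|U_n\|_{\mathcal{H}}\to 0$, contradicting the normalization.

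The hardest part is bookkeeping: I must verify that every multiplier identity in \cite{Gerbi2021} or \cite{Alabau2017} survives the insertion of $d_2 y_t$ in the $y$-equation \emph{without} requiring a smallness assumption on $\|d_2\|_{L^\infty(\Omega)}$. Since $d_2\geq 0$, the new contributions are either inherently dissipative (carrying the favorable sign) or of the form $\int_\Omega d_2 y_{2,n}\bar{\phi}\,dx$ for some multiplier $\phi$, and are absorbed via Cauchy--Schwarz using the already-established smallness of $\|\sqrt{d_2}\,y_{2,n}\|_{L^2(\Omega)}$. Hence the adaptation should be essentially mechanical, yielding constants $\hat{M},\hat{\alpha}$ that depend on $\|d_2\|_{L^\infty(\Omega)}$ but require no smallness thereof.
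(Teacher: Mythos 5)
Your frequency--domain route is viable, but it is genuinely different from the paper's argument. The paper works entirely in the time domain: following \cite{enrike1990exponential} and \cite{nicaise2006stability}, it splits the solution of \eqref{definetedamping} as $u=w+\tilde w$, $y=\theta+\tilde\theta$, where $(w,\theta)$ solves the system \emph{without} the $d_2$-damping (the one whose semigroup is already known to be exponentially stable by \eqref{exp_decay}) with the same initial data, and $(\tilde w,\tilde\theta)$ solves the same system with zero data and source $-d_2y_t$. Exponential stability of the unperturbed system yields, via \cite[Proposition 2]{Haraux}, the observability inequality $\mathcal{E}(0)\le C\int_0^T\int_\Omega d_1 w_t^2$; an elementary energy estimate on the $(\tilde w,\tilde\theta)$-system bounds $\int_0^T\int_\Omega\tilde w_t^2$ by $\int_0^T\int_\Omega d_2 y_t^2$, and combining with the dissipation identity \eqref{decergi} gives $\mathcal{E}(T)\le\hat C\,\mathcal{E}(0)$ with $\hat C<1$, whence \eqref{expstability} by translation invariance. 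Your Huang--Pr\"uss strategy reaches the same conclusion, and like the paper's proof it needs no smallness of $\|d_2\|_{L^\infty(\Omega)}$; however, as written it is heavier than necessary, since you propose to re-run the multiplier/frequency estimates of \cite{Gerbi2021,Alabau2017} with the extra term. You could instead exploit exactly the structural fact the paper exploits: writing $\hat{\mathcal A}U=\mathcal A_0U-(0,0,0,d_2y_2)^{T}$ with $\mathcal A_0$ the already exponentially stable generator, the dissipation relation in your contradiction argument gives $\|\sqrt{d_2}\,y_{2,n}\|_{L^2(\Omega)}\to 0$, hence $\|d_2y_{2,n}\|_{L^2(\Omega)}\to 0$, so $(i\beta_n I-\mathcal A_0)U_n\to 0$ and the uniform resolvent bound for $\mathcal A_0$ (available by Gearhart--Pr\"uss from \eqref{exp_decay}) yields $\|U_n\|_{\mathcal H}\to 0$ directly, with the same shortcut disposing of $i\mathbb{R}\subset\rho(\hat{\mathcal A})$. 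Two small points to tighten: the reduction of the spectrum on $i\mathbb{R}$ to eigenvalues should be justified by the compactness of the resolvent (which holds here since $D(\hat{\mathcal A})$ embeds compactly in $\mathcal H$), and the case $\beta=0$ does not follow from the surjectivity of $I-\hat{\mathcal A}$ in Theorem \ref{wellposedacta} but from the absence of a kernel together with that compactness. The trade-off is the usual one: the paper's decomposition argument uses the known decay \eqref{exp_decay} as a black box and stays elementary, while your spectral approach, once streamlined as above, is comparably short and gives the resolvent-based formulation of the decay rate.
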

\begin{proof}
	As in \cite{nicaise2006stability}, by following \cite{enrike1990exponential}, we can decompose the solution $(u,y)$ of \eqref{definetedamping}  as $u=w+\tilde{w}$ and $y=\theta+\tilde{\theta},$ 
	where $(w,\theta)$ is the solution to the problem 
	\begin{equation}\label{problemobs}
		\begin{cases}
			w_{tt}(x,t)-\Delta w(x,t)+b \theta_t(x,t)+d_1w_t(x,t)=0, & \text{in } \Omega \times (0, \infty), \\ 
			\theta_{tt}(x,t)-\Delta \theta (x,t)-b w_{t}(x,t)=0, & \text{in }\Omega \times (0, \infty), \\ 
			w(x,t)=\theta(x,t)=0, & \text {on }\partial\Omega \times (0, \infty),\\
			w(x,0)=u_{0}(x), \quad w_t(x,0)=u_1(x), & \text{in } \Omega, \\ 
			\theta(x,0)=y_0(x), \quad \theta_t(x,0)=y_1(x),  & \text{in } \Omega,
		\end{cases}
	\end{equation}
	and $(\tilde{w},\tilde{\theta})$ solution to the problem
	\begin{equation}\label{problemdecomp}
		\begin{cases}
			\tilde{w}_{tt}(x,t)-\Delta \tilde{w}(x,t)+b \tilde{\theta}_t(x,t)+d_1\tilde{w}_t(x,t)=0, & \text{in } \Omega \times (0, \infty), \\ 
			\tilde{\theta}_{tt}(x,t)-\Delta \tilde{\theta}(x,t)-b \tilde{w}_{t}(x,t)=-d_2(x)y_t(x,t), & \text{in }\Omega \times (0, \infty), \\ 
			\tilde{w}(x,t)=	\tilde{\theta}(x,t)=0, & \text {on }\partial\Omega \times (0, \infty),\\
			\tilde{w}(x,0)=0, \quad \tilde{w}_t(x,0)=0, & \text{in } \Omega, \\ 
			\tilde{\theta}(x,0)=0, \quad \tilde{\theta}_t(x,0)=0,  & \text{in } \Omega.
		\end{cases}
	\end{equation}	
	Thus, as a direct consequence of \cite[Proposition 2]{Haraux}, it follows that under either Assumption \ref{assumptionsgerbi} or Assumption \ref{assumptionsalabau}, the exponential stability property given by \eqref{exp_decay} implies the existence of a constant time $T_0 > 0$ such that, for every $T > T_0$, the following holds: 
	\begin{equation}
		\label{majE0}
\begin{aligned}
	\mathcal{E}(0)\leq  C \int_{0}^{T}\int_{\Omega}d_1(x)w_t^2(x,t)\,dx\,dt\leq 2C \int_\Omega d_1(x) \int_0^T  \left(u_{t}^2(x,t)+\tilde{w}_t^2(x, t)\right) d t d x.
	\end{aligned}
		\end{equation}
	Now, observe that from \eqref{problemdecomp}, it follows that
	$$\begin{aligned}
		\frac{1}{2} \frac{d}{d t} &  \int_{\Omega}\left\{\tilde{w}_t^2(x,t)+|\nabla \tilde{w}(x,t)|^2+\tilde{\theta}_t^2(x,t)+|\nabla \tilde{\theta}(x,t)|^2\right\} d x\\
		&=\int_{\Omega}\left\{\tilde{w}_t(x,t)\tilde{w}_{tt}(x,t)+\nabla \tilde{w}(x,t)\nabla \tilde{w}_t(x,t)+\tilde{\theta}_t(x,t)\tilde{\theta}_{tt}(x,t)+\nabla \tilde{\theta}(x,t)\nabla \tilde{\theta}_t(x,t)\right\}dx \\
		& =\int_{\Omega}\left(-d_1(x)\tilde{w}_t^2(x,t)- d_2(x)\tilde{\theta}_t(x,t)y_t(x,t)\right)  d x.
	\end{aligned}$$
	Integrating with respect to time on $[0, t]$, for $t \in(0, T]$, and using the fact that the initial data in \eqref{problemdecomp} is set to zero, we have
	$$
	\begin{array}{l}
\displaystyle{
		\frac{1}{2} \int_{\Omega}\left\{\tilde{w}_t^2(x,t)+|\nabla \tilde{w}(x,t)|^2+\tilde{\theta}_t^2(x,t)+|\nabla \tilde{\theta}(x,t)|^2\right\} d x }\\
\hspace{2 cm} \displaystyle{=  - \int_{\Omega}d_1(x)\int_0^t  \tilde{w}_t^2(x,t) d t d x -\int_{\Omega}d_2(x)\int_0^t \tilde{\theta}_t(x,t) y_t(x,t) d t d x}\\
\hspace{2 cm} \displaystyle{ \le \int_{\Omega}d_2(x)\int_0^T  \vert \tilde{\theta}_t(x,t) y_t(x,t)\vert d t d x.}
	\end{array}
	$$	
	By integrating with respect to time, we deduce that
	$$
	\begin{array}{l}
\displaystyle{\int_0^T \int_{\Omega} \left\{\tilde{w}_t^2(x,t)+\tilde{\theta}_t^2(x,t)\right\} d x d t} \\
\displaystyle{\hspace{2cm}
		\leq  \frac{1}{2}\int_{\Omega}\int_{0}^{T} \tilde{\theta}_t^2(x,t)  dt dx+2T^2\|d_2\|_{L^{\infty}(\Omega)}\int_{\Omega}d_2(x)\int_{0}^{T} y_t^2(x,t)  d t d x,}
		\end{array}
	$$
	which implies that
	$$
	\begin{aligned}
		\int_0^T \int_{\Omega} \tilde{w}_t^2(x,t) d x d t \leq& 2T^2\|d_2\|_{L^{\infty}(\Omega)} \int_{\Omega}d_2(x)\int_{0}^{T} y_t^2(x,t)  d t d x.
	\end{aligned}
	$$ 
	This combined with \eqref{majE0} yield that
	$$\mathcal{E}(0)\leq C_0 \left(\int_{\Omega}d_1(x)\int_{0}^{T} u_t^2(x,t) d x d t+\int_{\Omega}d_2(x)\int_{0}^{T} y_t^2(x,t) d x d t\right),$$
	where $C_0$ is a positive constant.

	From \eqref{decergi}, we have
	$$
	\mathcal{E}(T)-\mathcal{E}(0) =- \left(\int_{\Omega}d_1(x) \int_0^Tu_t^2(x, t) d t dx+\int_{\Omega}d_2(x) \int_0^Ty_t^2(x, t) d t dx\right).
	$$
	
	Hence, we obtain
	$$
	\mathcal{E}(T) \leq \mathcal{E}(0) \leq C_0 \left(\int_{\Omega}d_1(x) \int_0^Tu_t^2(x, t) d t dx+\int_{\Omega}d_2(x) \int_0^Ty_t^2(x, t) d t dx\right) = C_0 (\mathcal{E}(0)-\mathcal{E}(T)),
	$$
	then
	$$
	\mathcal{E}(T) \leq \hat{C} \mathcal{E}(0)
	$$
	with $\hat{C}<1$. 
	
	This easily implies the stability estimate \eqref{expstability}, since our system \eqref{definetedamping} is invariant by translation and the energy $\mathcal{E}$ is decreasing.
\end{proof}
Returning to the problem \eqref{indefiniteproblem}, and using the same notations as those introduced in Section \ref{section2}, we define the linear operator $\mathcal{\tilde{A}}: D(\mathcal{\tilde{A}}) \subset \mathcal{H} \to \mathcal{H}$ by
$$
D(\mathcal{\tilde{A}}) = \left( (H^2(\Omega) \cap H_0^1(\Omega)) \times H_0^1(\Omega) \right)^2, \quad 
\mathcal{\tilde{A}} U = \begin{bmatrix} 
	u_2 \\ 
	\Delta u_1 - b y_2 - a_1 u_2 \\ 
	y_2 \\  
	\Delta y_1 + b u_2 - a_2^+ y_2 
\end{bmatrix},
$$
where $a_2^+$ denotes the positive part of the function $a_2$, defined by
$$
a_2^+(x) := \max\{a_2(x), 0\}, \quad \forall x \in \Omega.
$$

By applying Theorem \ref{Theoremexpstabilityd} with $d_1 = a_1$ and $d_2 = a_2^+$, we deduce that the operator $\mathcal{\tilde{A}}$ is non-negative and generates a contraction semigroup $(\tilde{S}(t))_{t \geq 0}$ on $\mathcal{H}$, which is exponentially stable under either Assumption \ref{assumptionsgerbi} or Assumption \ref{assumptionsalabau}. More precisely, there exist constants $\tilde{M} > 0$ and $\tilde{\alpha} > 0$ such that
$$
\| \tilde{S}(t) \|_{\mathcal{L}(\mathcal{H})} \leq \tilde{M} e^{-\tilde{\alpha} t}, \quad \forall t \geq 0.
$$

Consequently, the  coupled system \eqref{indefiniteproblem} can be reformulated as the following abstract Cauchy problem:
\begin{equation}
	\label{cauchyproblemindefinite}
	\begin{cases}
		\dot{U}(t) = \mathcal{\tilde{A}} U(t) + a_2^{-} P U(t) + \mathcal{F}(U(t)), & t > 0, \\
		U(0) = U_0, &
	\end{cases}
\end{equation}
where $P \in \mathbb{R}^{4 \times 4}$ is the linear projection operator defined by
$$
P := \begin{pmatrix}
	0 & 0 & 0 & 0 \\
	0 & 0 & 0 & 0 \\
	0 & 0 & 0 & 0 \\
	0 & 0 & 0 & 1
\end{pmatrix}.
$$

 Observe that, if  $U_0 \in \mathcal{H}$, then the following Duhamel formula holds: 
 \begin{equation}
 	\label{Duhamelindefinite}
 	U(t)=\tilde{S}(t) U_0+\int_0^t \tilde{S}(t-s)  a_2^{-} PU(s) d s+\int_0^t \tilde{S}(t-s) \mathcal{F}(U(s)) d s.
 \end{equation}
 Using  \cite[Proposition 4.3.3]{Cazenave1998}, we can establishing the local existence and uniqueness of solutions, as stated in the following lemma.
 \begin{lemma}
 	\label{lemmalocalexistenceindefinite}
 	Let $U_0\in\mathcal{H}$. Then, the system \eqref{cauchyproblemindefinite} has  a unique continuous local solution $U$ defined on
 	a time interval $[0,\delta)$.
 \end{lemma}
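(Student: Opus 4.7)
The plan is to recast the Cauchy problem \eqref{cauchyproblemindefinite} as a semilinear evolution equation of the form $\dot U(t) = \tilde{\mathcal{A}} U(t) + G(U(t))$, where $G:\mathcal{H}\to\mathcal{H}$ is the nonlinear map
$$
G(U) := a_2^{-} P U + \mathcal{F}(U).
$$
Since $\tilde{\mathcal{A}}$ generates the $C_0$-semigroup $(\tilde S(t))_{t\geq 0}$ on $\mathcal{H}$, a mild solution is simply a continuous function satisfying the Duhamel identity \eqref{Duhamelindefinite}, so the existence and uniqueness question reduces to a fixed-point argument for the map
$$
\Phi(U)(t) := \tilde S(t) U_0 + \int_0^t \tilde S(t-s) G(U(s))\,ds
$$
on a suitable closed ball of $C([0,\delta];\mathcal{H})$, which is precisely the content of \cite[Proposition 4.3.3]{Cazenave1998}. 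The strategy is therefore to verify the hypotheses of that proposition, namely that $G$ is locally Lipschitz on $\mathcal{H}$.

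The first step is to estimate the linear perturbation. Since $a_2^- \in L^\infty(\Omega)$ and $P$ acts componentwise as the projection onto the last coordinate, for every $U\in\mathcal{H}$ we have
$$
\|a_2^{-} P U\|_{\mathcal{H}} \leq \|a_2^{-}\|_{L^\infty(\Omega)} \|U\|_{\mathcal{H}},
$$
so $U \mapsto a_2^- P U$ is a bounded linear, hence globally Lipschitz, operator on $\mathcal{H}$. The second step handles the nonlinear part: the remark following Hypothesis \ref{nonlinearassumption} already yields, for every $r>0$ and all $U,V\in\mathcal{H}$ with $\|U\|_{\mathcal{H}},\|V\|_{\mathcal{H}}\leq r$,
$$
\|\mathcal{F}(U)-\mathcal{F}(V)\|_{\mathcal{H}} \leq L(r)\|U-V\|_{\mathcal{H}}.
$$
Combining the two estimates, $G$ is locally Lipschitz on $\mathcal{H}$ with constant $\|a_2^-\|_{L^\infty(\Omega)} + L(r)$ on each ball of radius $r$, and $G(0)=0$.

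The third and final step is to invoke \cite[Proposition 4.3.3]{Cazenave1998}: for any $U_0\in\mathcal{H}$, the semilinear Cauchy problem with generator $\tilde{\mathcal{A}}$ and locally Lipschitz nonlinearity $G$ admits a unique maximal mild solution $U\in C([0,\delta);\mathcal{H})$ for some $\delta>0$. Equivalently, one may run the contraction argument directly by choosing $R := 2\|U_0\|_{\mathcal{H}}$ and picking $\delta>0$ so small that $(\|a_2^-\|_{L^\infty(\Omega)}+L(R))\sup_{t\in[0,\delta]}\|\tilde S(t)\|_{\mathcal{L}(\mathcal{H})}\,\delta < 1/2$; then $\Phi$ maps the closed ball of radius $R$ in $C([0,\delta];\mathcal{H})$ into itself and is a strict contraction, whence Banach's fixed point theorem supplies the unique local solution. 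No genuine obstacle arises in this argument — the main point is merely to observe that the bounded perturbation $a_2^- P$ does not destroy the local Lipschitz structure already available for $\mathcal{F}$ from Hypothesis \ref{nonlinearassumption}.
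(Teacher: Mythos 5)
Your argument is exactly the one the paper intends: the paper simply invokes \cite[Proposition 4.3.3]{Cazenave1998} after rewriting \eqref{cauchyproblemindefinite} in the form $\dot U = \tilde{\mathcal A}U + G(U)$, and your proposal just makes explicit the (correct) verification that $G(U)=a_2^-PU+\mathcal F(U)$ is locally Lipschitz on $\mathcal H$, using the boundedness of $a_2^-P$ and the remark following Hypothesis \ref{nonlinearassumption}. No discrepancy with the paper's route.
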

 
Let us introduce the following energy functional associated to system \eqref{cauchyproblemindefinite}:
 \begin{definition}
 	Let $(u,y)$  be a mild solution of \eqref{indefiniteproblem}. Let us define its energy by
 	\begin{equation}
 		\begin{aligned}
 			\tilde{E}(t)&=\frac{1}{2}\int_{\Omega}\left\{u_t^2(x,t)+|\nabla u(x,t)|^2+y_t^2(x,t)+|\nabla y(x,t)|^2\right\}\,dx\\
 			&-\int_{\Omega}F_1(u(x,t))\,dx-\int_{\Omega}F_2(y(x,t))\,dx.
 		\end{aligned}
 	\end{equation}
 \end{definition}
 By arguing as in the proof of Lemma \ref{lemmaminenergy1}, we can establish the following result concerning the indefinite damping case.
 \begin{lemma}
 	\label{lemmaminenergy1indefinite}
 	Assume that Hypothesis \ref{nonlinearassumption} holds.  Let $U$ be a non-trivial solution of the problem \eqref{cauchyproblemindefinite}, defined on the interval $[0, T).$ If the initial data satisfy
 	$$
 	h_1(\|\nabla u_0\|_{L^2(\Omega)}) < \frac{1}{2} \quad \text{and} \quad h_2(\|\nabla y_0\|_{L^2(\Omega)}) < \frac{1}{2},
 	$$
 	then the initial energy is strictly positive, i.e., $\tilde{E}(0) > 0$.
 \end{lemma}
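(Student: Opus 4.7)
The plan is to mirror the argument used for Lemma \ref{lemmaminenergy1}, which is actually simpler here because the energy $\tilde E$ does not include the delay integral $\int_{t-\tau}^t\|\sqrt{|a_2|}y_t(s)\|^2_{L^2(\Omega)}ds$. So I would just need to estimate the two source-term contributions $\int_\Omega F_i(\cdot)\,dx$ at $t=0$ and combine them with the quadratic part.

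First, I would invoke Lemma \ref{lemmanonlinear} with $u=u_0$ and $u=y_0$ to write
\[
\left|\int_{\Omega}F_1(u_0(x))\,dx\right|\leq \tfrac{1}{2}h_1(\|\nabla u_0\|_{L^2(\Omega)})\|\nabla u_0\|_{L^2(\Omega)}^2
\]
and the analogous bound for $F_2(y_0)$. Using the hypotheses $h_1(\|\nabla u_0\|_{L^2(\Omega)})<\frac{1}{2}$ and $h_2(\|\nabla y_0\|_{L^2(\Omega)})<\frac{1}{2}$, these two quantities are strictly bounded by $\frac14\|\nabla u_0\|_{L^2(\Omega)}^2$ and $\frac14\|\nabla y_0\|_{L^2(\Omega)}^2$, respectively.

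Substituting into the definition of $\tilde E(0)$, I would obtain
\[
\tilde E(0)>\tfrac14\|u_1\|_{L^2(\Omega)}^2+\tfrac14\|\nabla u_0\|_{L^2(\Omega)}^2+\tfrac14\|y_1\|_{L^2(\Omega)}^2+\tfrac14\|\nabla y_0\|_{L^2(\Omega)}^2,
\]
which is strictly positive since $U$ is a non-trivial solution, so $U_0=(u_0,u_1,y_0,y_1)\neq 0$ in $\mathcal H$.

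There is no real obstacle: the argument is a direct transcription of Lemma \ref{lemmaminenergy1} without the extra delay integral, and the strictness of the inequalities $h_i(\cdot)<\frac{1}{2}$ is precisely what ensures the non-zero lower bound. The only minor point is to note that nontriviality of $U$ in the sense of the solution being non-zero forces at least one of the four terms $\|u_1\|_{L^2}$, $\|\nabla u_0\|_{L^2}$, $\|y_1\|_{L^2}$, $\|\nabla y_0\|_{L^2}$ to be strictly positive, so the strict inequality $\tilde E(0)>0$ follows.
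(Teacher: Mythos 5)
Your proof is correct and is essentially the paper's own argument: the paper proves this lemma simply by referring back to the proof of Lemma \ref{lemmaminenergy1}, which is exactly the computation you reproduce (bound the $F_i$ terms via Lemma \ref{lemmanonlinear} and the strict hypotheses $h_i(\cdot)<\frac{1}{2}$, then use nontriviality of $U_0$), minus the delay integral. No gaps.
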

 We also have the following result.
 \begin{proposition}
 	\label{Propmajenergyindefinite}
 	Let $(u,y)$ be a solution to \eqref{indefiniteproblem}. If $\tilde{E}(t)\geq \frac{1}{4}\|y_t(t)\|^2_{L^2(\Omega)}$, for any $t\geq 0$, then 
 	\begin{equation}
 		\label{estimateenergyindefinite}
 		\tilde{E}(t)\leq \tilde{C}(t)\tilde{E}(0),
 	\end{equation}
 	for any $t\geq 0$, where 
 	\begin{equation}
 		\label{functionCindefinite}
 		\tilde{C}(t)=e^{4\|a_2^{-}\|_{L^{\infty}(\Omega)}t}.
 	\end{equation}
 \end{proposition}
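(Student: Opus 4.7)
The plan is to mimic the argument used for Proposition \ref{Propmajenergy}, replacing the delayed-damping contribution by the contribution from the negative part $a_2^-$. Concretely, I would first differentiate $\tilde{E}$ formally in $t$, which gives
\[
\tilde{E}'(t)=\int_\Omega\bigl\{u_t u_{tt}+\nabla u\cdot\nabla u_t+y_t y_{tt}+\nabla y\cdot\nabla y_t\bigr\}dx-\int_\Omega u_t f_1(u)\,dx-\int_\Omega y_t f_2(y)\,dx.
\]
Then I would substitute $u_{tt}$ and $y_{tt}$ from the PDEs in \eqref{indefiniteproblem} and integrate by parts (Green's formula) on the Laplacian terms, using the homogeneous Dirichlet boundary conditions to kill the boundary integrals.

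The next step is the key algebraic simplification: the nonlinear source terms cancel against $-\int u_t f_1(u)$ and $-\int y_t f_2(y)$, and the coupling terms $-\int b y_t u_t\,dx$ and $+\int b u_t y_t\,dx$ coming from the two equations cancel each other exactly. What remains is
\[
\tilde{E}'(t)=-\int_\Omega a_1(x)u_t^2(x,t)\,dx-\int_\Omega a_2(x)y_t^2(x,t)\,dx.
\]
Splitting $a_2=a_2^+-a_2^-$ and discarding the two non-positive contributions yields $\tilde{E}'(t)\leq\int_\Omega a_2^-(x)y_t^2(x,t)\,dx\leq\|a_2^-\|_{L^\infty(\Omega)}\|y_t(t)\|_{L^2(\Omega)}^2$.

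Finally, I would invoke the standing assumption $\tilde{E}(t)\geq\frac14\|y_t(t)\|_{L^2(\Omega)}^2$ to obtain the linear differential inequality
\[
\tilde{E}'(t)\leq 4\|a_2^-\|_{L^\infty(\Omega)}\,\tilde{E}(t),
\]
and Gronwall's inequality then yields \eqref{estimateenergyindefinite} with $\tilde{C}(t)=e^{4\|a_2^-\|_{L^\infty(\Omega)}t}$ as in \eqref{functionCindefinite}. Since the calculation is a direct transcription of the one in Proposition \ref{Propmajenergy} with $a_2(x)y_t(x,t-\tau)$ replaced by $a_2(x)y_t(x,t)$ and the delay history integral absent, there is no real obstacle; the only point demanding care is bookkeeping the sign of the damping term, i.e.\ recognizing that only $a_2^-$ (and not all of $|a_2|$, as in the delayed case) contributes positively to $\tilde{E}'$, which is precisely why the constant in the exponential is $4\|a_2^-\|_{L^\infty(\Omega)}$ rather than $4\|a_2\|_{L^\infty(\Omega)}$.
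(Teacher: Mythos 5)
Your proposal is correct and follows essentially the same route as the paper's proof: formal differentiation of $\tilde{E}$, substitution of the equations with cancellation of the coupling and source terms, the decomposition $a_2=a_2^{+}-a_2^{-}$ to isolate the only nonnegative contribution $\int_\Omega a_2^{-}y_t^2\,dx$, and then the hypothesis $\tilde{E}(t)\geq\frac14\|y_t(t)\|^2_{L^2(\Omega)}$ followed by Gronwall's inequality.
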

 \begin{proof}
 	Differentiating formally $\tilde{E}$ with respect to $t$, we
 	obtain
 	$$\begin{aligned}
 		\tilde{E}^{\prime}(t)&=\int_{\Omega}\left\{u_t(x,t) u_{tt}(x,t)+\nabla u(x,t)\nabla u_t(x,t) +y_t(x,t)y_{tt}(x,t)+\nabla y(x,t)\nabla y_t(x,t)\right\}\,dx\\
 		&-\int_{\Omega}f_1(u(x,t))u_t(x,t)\,dx-\int_{\Omega}f_2(y(x,t))y_t(x,t)\,dx.
 	\end{aligned}$$
 	By using the equations satisfied by $u$ and $y$ in  \eqref{coupledstabilityproblem} and Green formula, we get
 	$$\begin{aligned}
 		\tilde{E}^{\prime}(t)&=\int_{\Omega}\left\{u_t(x,t) \left(\Delta u(x,t)-by_t(x,t)-a_1(x)u_t(x,t)+f_1(u(x,t))\right)+\nabla u(x,t)\nabla u_t(x,t)\right.\\
 		&\left.+y_t(x,t)\left(\Delta y(x,t)+b u_{t}(x,t)-a_2(x)y_t(x,t)+f_2(y(x,t))\right)+\nabla y(x,t)\nabla y_t(x,t)\right\}\,dx\\
 		&-\int_{\Omega}u_t(x,t)f_1(u(x,t))\,dx-\int_{\Omega}y_t(x,t)f_2(y(x,t))\,dx\\
 		=&-\int_{\Omega}a_1(x)u_t^2(x,t)\,dx-
 		\int_{\Omega}a_2(x)y_t^2(x,t)\,dx\\
 		=&-\int_{\Omega}a_1(x)u_t^2(x,t)\,dx-
 		\int_{\Omega}a_2^{+}(x)y_t^2(x,t)\,dx+\int_{\Omega}a_2^{-}(x)y_t^2(x,t)\,dx\\
 		&\leq\int_{\Omega}a_2^{-}(x)y_t^2(x,t)\,dx\leq \Vert a_2^{-}\Vert_{L^{\infty}(\Omega)}\|y_t(t)\|_{L^2(\Omega)}^2.
 	\end{aligned}$$
 	Since we have assumed $\tilde{E}(t)\geq \frac{1}{4}\|y_t(t)\|_{L^2(\Omega)}^2$
 	for any $t\geq 0$, we obtain
 	$$\tilde{E}^{\prime}(t)\leq 4\|a_2^{-}\|_{L^{\infty}(\Omega)}\tilde{E}(t).$$
 	By Gronwall inequality we then obtain \eqref{estimateenergyindefinite}.
 \end{proof}
Using Proposition \ref{Propmajenergyindefinite} and adapting the arguments used in the proof of Lemma \ref{lemmaminenergy}, we obtain the following result concerning the lower bound of the energy in the presence of indefinite damping.
\begin{lemma}
	\label{lemmaminenergyindefinite}
	Assume that Hypothesis \ref{nonlinearassumption} holds. Let $U$ be a non-trivial solution of  problem \eqref{cauchyproblemindefinite}, defined on the interval $[0, \delta)$ and fix a time $T\ge \delta$. 
	
	Suppose the initial data $(u_0, u_1, y_0, y_1)$ satisfy the following conditions:
	$$
	h_1\left(\|\nabla u_0\|_{L^2(\Omega)}\right) < \frac{1}{2}, \quad 
	h_2\left(\|\nabla y_0\|_{L^2(\Omega)}\right) < \frac{1}{2}, \quad 
	\text{and} \quad 
	h_i\left(2\sqrt{\tilde{C}(T)\tilde{E}(0)}\right) < \frac{1}{2} \quad \text{for } i = 1, 2,
	$$
	where $\tilde{C}(\cdot )$ is the function defined in \eqref{functionCindefinite}.
	
	Then, the energy $\tilde{E}(t)$ satisfies the following strict lower bound for all $t \in [0, \delta)$:
	\begin{equation}
		\label{minorationenergyindefinite}
		\tilde{E}(t) > \frac{1}{4}\|u_t(t)\|^2_{L^2(\Omega)} + \frac{1}{4}\|\nabla u(t)\|^2_{L^2(\Omega)} + \frac{1}{4}\|y_t(t)\|^2_{L^2(\Omega)} + \frac{1}{4}\|\nabla y(t)\|^2_{L^2(\Omega)}.
	\end{equation}
	
	In particular, one has:
	$$
	\tilde{E}(t) > \frac{1}{4} \|U(t)\|^2_{\mathcal{H}}, \quad \forall t \in [0, \delta).
	$$
\end{lemma}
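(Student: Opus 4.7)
The plan is to mimic the contradiction-by-continuity argument used in the proof of Lemma \ref{lemmaminenergy}, with the energy $E$ replaced by $\tilde{E}$, the function $C(\cdot)$ replaced by $\tilde{C}(\cdot)$, Proposition \ref{Propmajenergy} replaced by Proposition \ref{Propmajenergyindefinite}, and with the memory integral $\int_{t-\tau}^{t}\|\sqrt{|a_2|}y_t(s)\|^2\,ds$ simply dropped (since there is no delay here).

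First, I would introduce
$$r := \sup\bigl\{s \in [0,\delta)\ :\ \eqref{minorationenergyindefinite}\ \text{holds for every}\ t\in[0,s)\bigr\}$$
and note that $r>0$ by Lemma \ref{lemmaminenergy1indefinite} together with a local continuity argument at $t=0$. Arguing by contradiction, I would suppose $r<\delta$. By continuity of $t\mapsto\tilde{E}(t)$, $t\mapsto\|\nabla u(t)\|_{L^2(\Omega)}$, etc., passing to the limit as $t\uparrow r$ yields the non-strict identity
$$\tilde{E}(r) = \tfrac{1}{4}\|u_t(r)\|^2_{L^2(\Omega)} + \tfrac{1}{4}\|\nabla u(r)\|^2_{L^2(\Omega)} + \tfrac{1}{4}\|y_t(r)\|^2_{L^2(\Omega)} + \tfrac{1}{4}\|\nabla y(r)\|^2_{L^2(\Omega)}.$$
In particular, $\tilde{E}(r)\ge \tfrac14\|y_t(r)\|_{L^2(\Omega)}^2$, $\tilde{E}(r)\ge \tfrac14\|\nabla u(r)\|_{L^2(\Omega)}^2$ and $\tilde{E}(r)\ge \tfrac14\|\nabla y(r)\|_{L^2(\Omega)}^2$.

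Next, since the hypothesis of Proposition \ref{Propmajenergyindefinite} is fulfilled on $[0,r]$ (by definition of $r$ combined with the displayed inequality above at $t=r$), I obtain $\tilde{E}(r)\le \tilde{C}(r)\tilde{E}(0)\le \tilde{C}(T)\tilde{E}(0)$. Using that $h_1$ and $h_2$ are non-decreasing, this gives
$$h_i\bigl(\|\nabla u(r)\|_{L^2(\Omega)}\bigr)\le h_i\bigl(2\sqrt{\tilde{E}(r)}\bigr)\le h_i\bigl(2\sqrt{\tilde{C}(T)\tilde{E}(0)}\bigr)<\tfrac{1}{2},\quad i=1,2,$$
and similarly for $\|\nabla y(r)\|_{L^2(\Omega)}$. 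Plugging these bounds into the estimate \eqref{estimatenonlinearterm} from Lemma \ref{lemmanonlinear} yields
$$\left|\int_{\Omega}F_1(u(x,r))\,dx\right|<\tfrac14\|\nabla u(r)\|_{L^2(\Omega)}^2,\qquad \left|\int_{\Omega}F_2(y(x,r))\,dx\right|<\tfrac14\|\nabla y(r)\|_{L^2(\Omega)}^2.$$
Inserting these \emph{strict} inequalities into the definition of $\tilde{E}(r)$ immediately produces
$$\tilde{E}(r)>\tfrac14\|u_t(r)\|_{L^2(\Omega)}^2+\tfrac14\|\nabla u(r)\|_{L^2(\Omega)}^2+\tfrac14\|y_t(r)\|_{L^2(\Omega)}^2+\tfrac14\|\nabla y(r)\|_{L^2(\Omega)}^2,$$
which contradicts the equality obtained above at $t=r$. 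Hence $r=\delta$, which proves \eqref{minorationenergyindefinite} on $[0,\delta)$, and the last assertion follows by summing the four terms.

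The only delicate point, and the step I would check most carefully, is the applicability of Proposition \ref{Propmajenergyindefinite} at $t=r$: one needs the hypothesis $\tilde{E}(t)\ge \tfrac14\|y_t(t)\|_{L^2(\Omega)}^2$ to hold on the whole interval $[0,r]$, not just on $[0,r)$. This is ensured by the continuity argument at $t=r$ described above, which transfers the open-interval strict bound to a closed-interval non-strict bound. Everything else is a direct transcription of the delayed-case proof.
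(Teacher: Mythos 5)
Your proposal is correct and is essentially the paper's own argument: the authors prove this lemma precisely by transcribing the contradiction-by-continuity proof of Lemma \ref{lemmaminenergy}, replacing $E$, $C(\cdot)$ and Proposition \ref{Propmajenergy} by $\tilde{E}$, $\tilde{C}(\cdot)$ and Proposition \ref{Propmajenergyindefinite} and dropping the delay memory term, exactly as you do. Your explicit remark on passing the bound $\tilde{E}(t)\ge\tfrac14\|y_t(t)\|^2_{L^2(\Omega)}$ from $[0,r)$ to $t=r$ by continuity is the same (implicit) step the paper relies on, so no discrepancy there.
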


 Next, we establish an exponential decay result for the abstract system \eqref{cauchyproblemindefinite} under a weaker condition on the damping coefficient $a_2$. More precisely, we impose a smallness assumption only on the negative part of $a_2$. To this end, we introduce the following hypothesis:
 
 \begin{assumptions}
 	\label{assumptionona2indefinite}
 	The function $a_2 \in L^{\infty}(\Omega)$ satisfies
 	$$
 	\|a_2^{-}\|_{L^{\infty}(\Omega)} < \frac{\tilde{\alpha}}{\tilde{M}}.
 	$$
 \end{assumptions}
 
 Under this assumption, we prove that the solution to \eqref{cauchyproblemindefinite} decays exponentially in time. This is stated precisely in the following theorem.
 \begin{theorem}
 	\label{Theoremexpstabilityindefinite}
 	Assume that either Assumption \ref{assumptionsgerbi} or Assumption \ref{assumptionsalabau} holds, together with Hypotheses \ref{nonlinearassumption} and \ref{assumptionona2indefinite}. Let $U \in C([0, T); \mathcal{H})$ be a solution of the problem \eqref{cauchyproblemindefinite} satisfying the uniform bound
 	$$
 	\|U(t)\|_{\mathcal{H}} \leq C_\rho, \quad \forall t \in [0, T),
 	$$
 	for some constant $C_\rho > 0$ such that
 	$$
 	L(C_\rho) < \frac{\tilde{\alpha} - \tilde{M} \|a_2^{-}\|_{L^{\infty}(\Omega)}}{\tilde{M}},
 	$$
 	where $L(\cdot)$ is the Lipschitz constant of the nonlinearity as defined in Hypothesis \ref{nonlinearassumption}.
 	
 	Then, the following exponential decay estimate holds:
 	\begin{equation}
 		\label{ineqexpstabilityindefinite}
 	\|U(t)\|_{\mathcal{H}} \leq \tilde{M} \|U_0\|_{\mathcal{H}} \, e^{-\left(\tilde{\alpha} - \tilde{M} \|a_2^{-}\|_{L^{\infty}(\Omega)} - \tilde{M} L(C_\rho)\right)t}, \quad \forall t \in [0, T).
 		\end{equation}
 \end{theorem}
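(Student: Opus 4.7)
The plan is to follow the same strategy as in the proof of Theorem \ref{Theoremexpstability}, but without the complication of a delay term: since the right-hand side of \eqref{cauchyproblemindefinite} depends only on $U(t)$ (and not on $U(t-\tau)$), there is no need to split the time axis into the regimes $0\le t<\tau$ and $t\ge\tau$, and a single Gronwall estimate will do the job.

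More precisely, I start from the Duhamel formula \eqref{Duhamelindefinite}, take $\mathcal{H}$-norms and use the exponential decay $\|\tilde{S}(t)\|_{\mathcal{L}(\mathcal{H})}\le \tilde{M}e^{-\tilde{\alpha}t}$. For the linear perturbation term, observe that $P$ is an orthogonal projection on $\mathbb{R}^4$, so
$$
\|a_2^{-}(x)PU(s)\|_{\mathcal{H}}=\|a_2^{-}y_t(s)\|_{L^2(\Omega)}\le \|a_2^{-}\|_{L^{\infty}(\Omega)}\,\|U(s)\|_{\mathcal{H}}.
$$
For the nonlinear term, the uniform bound $\|U(t)\|_{\mathcal{H}}\le C_\rho$ together with the Lipschitz estimate derived from Hypothesis \ref{nonlinearassumption} give $\|\mathcal{F}(U(s))\|_{\mathcal{H}}\le L(C_\rho)\|U(s)\|_{\mathcal{H}}$. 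Combining these three ingredients yields
$$
\|U(t)\|_{\mathcal{H}}\le \tilde{M}e^{-\tilde{\alpha}t}\|U_0\|_{\mathcal{H}}+\tilde{M}e^{-\tilde{\alpha}t}\int_0^t e^{\tilde{\alpha}s}\bigl(\|a_2^{-}\|_{L^{\infty}(\Omega)}+L(C_\rho)\bigr)\|U(s)\|_{\mathcal{H}}\,ds.
$$

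Next, I set $\tilde{v}(t):=e^{\tilde{\alpha}t}\|U(t)\|_{\mathcal{H}}$ and multiply through by $e^{\tilde{\alpha}t}$, which turns the previous inequality into
$$
\tilde{v}(t)\le \tilde{M}\|U_0\|_{\mathcal{H}}+\tilde{M}\bigl(\|a_2^{-}\|_{L^{\infty}(\Omega)}+L(C_\rho)\bigr)\int_0^t \tilde{v}(s)\,ds.
$$
Gronwall's inequality then yields $\tilde{v}(t)\le \tilde{M}\|U_0\|_{\mathcal{H}}\exp\bigl(\tilde{M}(\|a_2^{-}\|_{L^{\infty}(\Omega)}+L(C_\rho))t\bigr)$, and returning to $\|U(t)\|_{\mathcal{H}}=e^{-\tilde{\alpha}t}\tilde{v}(t)$ produces exactly the decay estimate \eqref{ineqexpstabilityindefinite}.

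There is no genuine obstacle here: the exponent in \eqref{ineqexpstabilityindefinite} is positive precisely because Hypothesis \ref{assumptionona2indefinite} enforces $\tilde{M}\|a_2^{-}\|_{L^{\infty}(\Omega)}<\tilde\alpha$ and the choice of $C_\rho$ further guarantees $\tilde{M}L(C_\rho)<\tilde\alpha-\tilde{M}\|a_2^{-}\|_{L^{\infty}(\Omega)}$. The only point requiring a touch of care is to justify the pointwise operator-norm bound on the perturbation $a_2^{-}P$, which relies solely on the fact that $P$ kills the first three components and acts as the identity on the fourth.
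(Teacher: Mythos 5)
Your proposal is correct and follows essentially the same route as the paper: Duhamel's formula \eqref{Duhamelindefinite}, the semigroup bound $\|\tilde S(t)\|_{\mathcal{L}(\mathcal{H})}\le\tilde M e^{-\tilde\alpha t}$, the estimates $\|a_2^{-}PU(s)\|_{\mathcal{H}}\le\|a_2^{-}\|_{L^{\infty}(\Omega)}\|U(s)\|_{\mathcal{H}}$ and $\|\mathcal{F}(U(s))\|_{\mathcal{H}}\le L(C_\rho)\|U(s)\|_{\mathcal{H}}$, followed by Gronwall's inequality applied to $e^{\tilde\alpha t}\|U(t)\|_{\mathcal{H}}$. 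Your explicit justification of the bound on the $P$-term (which the paper leaves implicit) is a correct, if minor, addition.
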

 \begin{proof}
 	Applying Duhamel's formula \eqref{Duhamelindefinite}, we obtain
 	\begin{equation}
 		\label{ineqDuhamelindefinite}
 		\begin{aligned}
 			\|U(t)\|_{\mathcal{H}} \leq & \tilde{M} e^{-\tilde{\alpha} t}\left\|U_0\right\|_{\mathcal{H}}+\tilde{M} e^{-\tilde{\alpha} t} \int_0^t e^{ \tilde{\alpha} s} \|a_2^{-}\|_{L^{\infty}(\Omega)}  \|P U(s)\|_{\mathcal{H}} d s \\
 			& +\tilde{M} e^{- \tilde{\alpha} t}L\left(C_\rho\right)  \int_0^t e^{ \tilde{\alpha} s}\|U(s)\|_{\mathcal{H}} d s,
 		\end{aligned}
 	\end{equation}
 	where we have used the assumption $\|F(U(t))\|_{\mathcal{H}} \leq L(C_\rho)\|U(t)\|_{\mathcal{H}}$ for all $t \geq 0$.
 		
 	 It follows that
 		\begin{equation*}
 			\begin{aligned}
 				 e^{ \tilde{\alpha} t}\|U(t)\|_{\mathcal{H}} \leq\tilde{M} \left\|U_0\right\|_{\mathcal{H}}+\left(\tilde{M}\|a_2^{-}\|_{L^{\infty}(\Omega)}+\tilde{M}L\left(C_\rho\right) \right) \int_0^t e^{ \tilde{\alpha} s}\|U(s)\|_{\mathcal{H}} d s.
 			\end{aligned}
 		\end{equation*}
 		By Gronwall's inequality, we obtain
 		\begin{equation*}
 		 e^{ \tilde{\alpha} t}\|U(t)\|_{\mathcal{H}}\leq\tilde{M} \left\|U_0\right\|_{\mathcal{H}}e^{\left(\tilde{M}\|a_2^{-}\|_{L^{\infty}(\Omega)}+\tilde{M}L\left(C_\rho\right) \right) t}.
 		\end{equation*}
 		As before, using assumption \ref{assumptionona2indefinite}, we obtain \eqref{ineqexpstabilityindefinite}.
 \end{proof}
We are now in a position to establish the global well-posedness and the exponential decay estimates for solutions to \eqref{indefiniteproblem} corresponding to small initial data.
 
 \begin{theorem}
 	Assume that either Assumption \ref{assumptionsgerbi} or Assumption \ref{assumptionsalabau} holds, together with Hypotheses \ref{nonlinearassumption} and \ref{assumptionona2indefinite}. Let the initial data $U_0 \in \mathcal{H}$ satisfy
 	\begin{equation}
 		\label{smallnessinitialdataindefinite}
 		\|U_0\|_{\mathcal{H}} < \rho,
 	\end{equation}
 	for a sufficiently small constant $\rho > 0$. Then, the Cauchy problem \eqref{cauchyproblemindefinite} admits a unique global solution which decays exponentially in time according to estimate \eqref{ineqexpstabilityindefinite}.
 \end{theorem}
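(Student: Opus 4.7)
The plan is to transpose the bootstrap argument used in the proof of the delayed global existence theorem to the (simpler) no-delay indefinite-damping setting; all the ingredients, namely the local existence result (Lemma \ref{lemmalocalexistenceindefinite}), the energy bounds from below and above (Lemma \ref{lemmaminenergy1indefinite}, Lemma \ref{lemmaminenergyindefinite}, Proposition \ref{Propmajenergyindefinite}), and the conditional exponential decay estimate (Theorem \ref{Theoremexpstabilityindefinite}), have already been established. The main simplification compared with Section \ref{section2} is the absence of the history integral, so the smallness condition only has to control $\|U_0\|_{\mathcal H}$.

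First I would fix a time $T>0$ large enough so that
$$\tilde C_T:=2\tilde M^{2}\,e^{-(\tilde\alpha-\tilde M\|a_2^{-}\|_{L^\infty(\Omega)})T}<1,$$
which is possible thanks to Hypothesis \ref{assumptionona2indefinite}. Then I would pick $\rho>0$ so small that
$$\rho\le\frac{1}{2\sqrt{\tilde C(T)}}\min_{i=1,2}\bigl\{h_i^{-1}(1/2)\bigr\},\qquad L(C_\rho)<\frac{\tilde\alpha-\tilde M\|a_2^{-}\|_{L^\infty(\Omega)}}{2\tilde M},$$
with $C_\rho:=2\sqrt{\tilde C(T)}\rho$, using that $L(r)\to 0$ as $r\to 0$. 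Starting from initial data with $\|U_0\|_{\mathcal H}<\rho$, Lemma \ref{lemmalocalexistenceindefinite} yields a local solution on $[0,\delta)$, and we may assume $\delta\le T$. The smallness of $\rho$ gives $h_i(\|\nabla u_0\|_{L^2})<1/2$ and $h_i(\|\nabla y_0\|_{L^2})<1/2$, so by Lemma \ref{lemmaminenergy1indefinite} we get $\tilde E(0)>0$, and Lemma \ref{lemmanonlinear} yields $\tilde E(0)\le\rho^{2}$ (up to harmless constants). This implies $h_i(2\sqrt{\tilde C(T)\tilde E(0)})<1/2$, so Lemma \ref{lemmaminenergyindefinite} gives $\tilde E(t)>\tfrac14\|U(t)\|_{\mathcal H}^{2}$ on $[0,\delta)$, and Proposition \ref{Propmajenergyindefinite} yields the upper bound $\tilde E(t)\le \tilde C(T)\tilde E(0)\le\tilde C(T)\rho^{2}$ on the same interval. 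Combining the two gives a uniform a priori bound on $\|U(t)\|_{\mathcal H}$, which allows us to extend the solution to $[0,T]$, and at $t=T$ the conditions on $h_i$ are preserved.

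With $\|U(t)\|_{\mathcal H}\le C_\rho$ on $[0,T]$, Theorem \ref{Theoremexpstabilityindefinite} applies and gives
$$\|U(t)\|_{\mathcal H}\le \tilde M\|U_0\|_{\mathcal H}\,e^{-\frac{\tilde\alpha-\tilde M\|a_2^{-}\|_{L^\infty(\Omega)}}{2}t},\qquad t\in[0,T].$$
In particular, at $t=T$ we obtain $\|U(T)\|_{\mathcal H}^{2}\le \tilde C_T\rho^{2}<\rho^{2}$, so the same smallness hypothesis holds at the restart time. By translation invariance of the equation and the monotonicity of the norm bound, the whole argument can be iterated on $[T,2T]$, then on $[2T,3T]$, and so on, producing a unique global solution on $[0,\infty)$. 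Since the decay rate coming from Theorem \ref{Theoremexpstabilityindefinite} is the same on every subinterval, patching the estimates yields the global exponential bound \eqref{ineqexpstabilityindefinite}.

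The only real obstacle is the calibration of the constants: one has to make sure simultaneously that (i) $C_\rho$ is small enough that $L(C_\rho)$ fits into the smallness window required by Theorem \ref{Theoremexpstabilityindefinite}, (ii) $\rho$ is small enough that the nonlinear source does not destroy the positivity and the monotone-control of the energy on $[0,T]$, and (iii) $T$ is large enough that one period of exponential decay provides a strict contraction $\tilde C_T<1$ of the smallness bound, which is what enables the inductive extension. Since these three requirements involve independent parameters ($T$ then $\rho$ then possibly a smaller $\rho$), each can be met in turn, and the argument closes exactly as in the delayed case, just without having to track the extra history term $\int_0^\tau e^{\alpha s}\|\Psi(s-\tau)\|_{\mathcal H}ds$.
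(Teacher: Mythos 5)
Your proposal is correct and follows essentially the same bootstrap argument as the paper: fix $T$ so that one decay period gives a strict contraction of the smallness bound, choose $\rho$ small enough for the energy lemmas and for $L(C_\rho)$, extend the local solution to $[0,T]$ via the two-sided energy estimates, apply the conditional decay theorem, and iterate. Your contraction condition $2\tilde M^{2}e^{-(\tilde\alpha-\tilde M\|a_2^{-}\|_{L^\infty(\Omega)})T}<1$ is in fact the cleaner form of the paper's condition \eqref{conditiononT} (which carries a spurious $\rho^{2}$ factor), so no gap.
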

 
 \begin{proof}
 	 Let us fix $T > 0$, such that 
 	\begin{equation}
 		\label{conditiononT}
 	\tilde{M}^2 \rho^2 e^{-(\tilde{\alpha} - \tilde{M} \|a_2^{-}\|_{L^\infty(\Omega)}) T} < 1.
 		\end{equation}
 	Moreover let   $\rho > 0$ be such that
 	$$
 	\rho \leq \frac{1}{2\sqrt{\tilde{C}(T)}} \min_{i=1,2} \left\{ h_i^{-1}\left( \frac{1}{2} \right) \right\}.
 	$$ 
 	We consider initial data satisfying
 	\begin{equation}
 		\label{assymptiononinitialdataindefinite}
 		\|U_0\|_{\mathcal{H}} < \rho.
 	\end{equation}
 	By Lemma \ref{lemmalocalexistenceindefinite}, there exists a local solution $(u,y)$ to \eqref{indefiniteproblem} on some interval $[0,\delta)$, with $\delta > 0$. Without loss of generality, we may assume $\delta < T$.
 	
 	Using the monotonicity of $h_i$ and the fact that $\tilde{C}(T) > 1$, we obtain
 	$$
 	h_1(\|\nabla u_0\|_{L^2(\Omega)}) \leq h_1(\rho) < \frac{1}{2}, \quad h_2(\|\nabla y_0\|_{L^2(\Omega)}) \leq h_2(\rho) < \frac{1}{2}.
 	$$
 	Therefore, by Lemma \ref{lemmaminenergy1indefinite}, we have $\tilde{E}(0) > 0$. Moreover, from estimate \eqref{estimatenonlinearterm}, it follows that
 	\begin{equation}
 		\label{ineqhelp0indefinite}
 		\tilde{E}(0) \leq \frac{1}{2}\|u_1\|_{L^2(\Omega)}^2 + \frac{3}{4}\|\nabla u_0\|_{L^2(\Omega)}^2 + \frac{1}{2}\|y_1\|_{L^2(\Omega)}^2 + \frac{3}{4}\|\nabla y_0\|_{L^2(\Omega)}^2 \leq \rho^2.
 	\end{equation}
 	This implies, for $i=1,2$,
 	$$
 	h_i\big(2\sqrt{\tilde{C}(T)\tilde{E}(0)}\big) < h_i\left(h_i^{-1}\left(\frac{1}{2}\right)\right) = \frac{1}{2}.
 	$$
 	Hence, applying Lemma \ref{lemmaminenergyindefinite}, we obtain the energy lower bound
 	\begin{equation}
 		\label{ineqhelp1indefinite}
 		\tilde{E}(t) > \frac{1}{4} \|u_t(t)\|^2_{L^2(\Omega)} + \frac{1}{4} \|\nabla u(t)\|^2_{L^2(\Omega)} + \frac{1}{4} \|y_t(t)\|^2_{L^2(\Omega)} + \frac{1}{4} \|\nabla y(t)\|^2_{L^2(\Omega)} > 0, \quad \forall t \in [0,\delta).
 	\end{equation}
 	
 	From Proposition \ref{Propmajenergyindefinite} and the fact that $\delta < T$, we deduce
 	\begin{equation}
 		\label{ineqhelp2indefinite}
 		\tilde{E}(t) \leq \tilde{C}(T) \tilde{E}(0), \quad \forall t \in [0,\delta).
 	\end{equation}
 	Combining \eqref{ineqhelp1indefinite} and \eqref{ineqhelp2indefinite}, we obtain the bounds
 	$$
 \frac{1}{4}	\|\nabla u(t)\|_{L^2(\Omega)}^2 \leq  \tilde{E}(t) \leq  \tilde{C}(t) \tilde{E}(0), \quad 
 	\frac{1}{4}\|\nabla y(t)\|_{L^2(\Omega)}^2 \leq  \tilde{E}(t) \leq  \tilde{C}(t) \tilde{E}(0).
 	$$
 	Hence, the solution can be extended beyond $\delta$ up to $t = T$. In particular, for $t = T$, we have
 $$h_1(\|\nabla u(T)\|_{L^2(\Omega)})< h_1(2\sqrt{\tilde{C}(T)\tilde{E}(0)})< \frac{1}{2}$$
 and 
 $$h_2(\|\nabla y(T)\|_{L^2(\Omega)})< h_2(2\sqrt{\tilde{C}(T)\tilde{E}(0)})< \frac{1}{2}.$$
 	From \eqref{ineqhelp1indefinite} and \eqref{ineqhelp0indefinite}, we also deduce
 	$$
 	\frac{1}{4} \|U(t)\|_{\mathcal{H}}^2 \leq \tilde{E}(t) \leq \tilde{C}(T) \tilde{E}(0) \leq \tilde{C}(T)\rho^2, \quad \forall t \in [0,T],
 	$$
 	which yields
 	$$
 	\|U(t)\|_{\mathcal{H}} \leq C_\rho := 2\sqrt{\tilde{C}(T)}\rho, \quad \forall t \in [0,T].
 	$$
 	
 	Now, by possibly taking $\rho$ smaller, we can ensure that
 	$$
 	L(C_\rho) < \frac{\tilde{\alpha} - \tilde{M} \|a_2^{-}\|_{L^\infty(\Omega)}}{2\tilde{M}}.
 	$$
 	Hence, Theorem \ref{Theoremexpstabilityindefinite} implies that the solution satisfies the exponential decay estimate
 	\begin{equation}
 		\label{majuintindefinite}
 		\|U(t)\|_{\mathcal{H}} \leq \tilde{M} \|U_0\|_{\mathcal{H}} e^{-\frac{\tilde{\alpha} - \tilde{M} \|a_2^{-}\|_{L^\infty(\Omega)}}{2} t}, \quad \forall t \in [0,T].
 	\end{equation}
 	In particular, we get
 	$$
 	\|U(t)\|_{\mathcal{H}}^2 \leq \tilde{M}^2 \rho^2 e^{-(\tilde{\alpha} - \tilde{M} \|a_2^{-}\|_{L^\infty(\Omega)}) t}, \quad \forall t \in [0,T].
 	$$
 	Then,
 	$$
 	\|U(T)\|_{\mathcal{H}} \leq \rho,
 	$$
 	where we have used \eqref{conditiononT}.
 	
 	Thus, we can iterate the argument on successive intervals $[T, 2T]$, $[2T, 3T]$, and so on. By induction, we construct a unique global solution to \eqref{cauchyproblemindefinite}. Furthermore, at each step, the exponential decay estimate \eqref{ineqexpstabilityindefinite} is preserved, ensuring that the global solution satisfies this decay property.
 \end{proof}
 \section*{Acknowledgments.} This work was started
 when A. Moumni was visiting the University of L'Aquila. A. Moumni thanks the MAECI (Ministry of Foreign Affairs
 and International Cooperation, Italy) for funding that greatly facilitated scientific collaborations between
 Moulay Ismail University of Meknes (Morocco) and University of L'Aquila (Italy).

C. Pignotti is a member of Gruppo Nazionale per l’Analisi Matematica,
la Probabilità e le loro Applicazioni (GNAMPA) of the Istituto Nazionale di
Alta Matematica (INdAM).  She is partially
supported by PRIN 2022  (2022238YY5) {\it Optimal control problems: analysis,
approximation and applications}, PRIN-PNRR 2022 (P20225SP98) {\it Some mathematical approaches to climate change and its impacts}, and by INdAM GNAMPA Project {\it Optimal Control and Machine Learning} (CUP E5324001950001).
 
\section*{Conflicts of Interest}
The authors declare that they have no conflict of interest.
	
	

	\bibliographystyle{unsrt}
	\bibliography{MPST-bibliography-v}
	

\end{document}